\newtheorem{definition}{Definition}
\newtheorem{proposition}{Proposition}
\newtheorem{lemma}{Lemma}
\newtheorem{theorem}{Theorem}
\newtheorem{corollary}{Corollary}
\title{Exponentiation of Graphs}
\author{Toru Hasunuma \\ \\ 
Department of Mathematical Sciences, Tokushima University, \\
2--1 Minamijosanjima, Tokushima 770--8506 Japan}
\date{\today}
\begin{document}

\maketitle

\begin{abstract}
Motivated by very large-scale communication networks,
we newly introduce exponentiation of graphs.
Using the exponential operation on graphs, 
we can construct various graphs of multi-exponential order  
with logarithmic diameter.
We show that every connected exponential graph is maximally connected.
For exponential graphs, we also present a necessary and sufficient condition 
to be super edge-connected
and sufficient conditions to be Hamiltonian
and to have edge-disjoint Hamiltonian cycles
and completely independent spanning trees. 
Applying our results to previously known networks, 
we have maximally connected and super edge-connected 
Hamiltonian graphs of doubly exponential order  
with logarithmic diameter. 
We furthermore define iterated exponential graphs
which may be of not only practical but also theoretical interest. 

\bigskip

\noindent {\bf Keywords:}
Exponential graphs; 
Graph operations; 
Logarithmic diameter;
Maximally connectedness;
Super edge-connectedness;
Multi-exponential-scale networks
\end{abstract}

\section{Introduction}

Throughout this paper, a graph means a simple undirected graph unless stated otherwise. 
Depending on the context,
we may also use the term ``network" with the same meaning as a graph.
Let $G = (V,E)$ be a graph. 
The order of $G$ is the number of vertices in $G$. 
A graph of order 1 (respectively, even order) is called trivial (respectively, even). 
For $v \in V(G)$, $N_G(v)$ denotes 
the set of vertices adjacent to $v$ in $G$.
The degree of a vertex $v$ in $G$ is denoted by ${\rm deg}_G(v)$,
i.e., ${\rm deg}_G(v) = |N_G(v)|$.
Let $\delta(G) = \min_{v \in V(G)}{\rm deg}_G(v)$ and
$\Delta(G) = \max_{v \in V(G)}{\rm deg}_G(v)$.
When $\delta(G) = \Delta(G) = r$, $G$ is called $r$-regular.
For a proper subset $S \subsetneq V(G)$ (respectively, a subset $F \subseteq E(H)$), 
we denote by $G-S$ (respectively, $G-F$) the graph obtained 
from $G$ by deleting every vertex in $S$ (respectively, every edge in $F$). 
For two sets $A$ and $B$, $A \setminus B$ denotes the set 
difference $\{ x \ |\ x \in A, x \not\in B\}$.
For a nonempty subset $S \subseteq V(G)$, the subgraph of $G$ induced by $S$ 
is denoted by $\langle S \rangle_G$, i.e., $\langle S \rangle_G = G-(V(G) \setminus S)$.

The connectivity $\kappa(G)$ of a graph $G$ is the minimum number of vertices whose removal from $G$
results in a disconnected graph or a trivial graph.
A graph $G$ is $k$-connected if $\kappa(G) \geq k$.
The edge-connectivity $\lambda(G)$ of $G$ is similarly defined
by replacing vertex-deletion with edge-deletion.
For a connected graph $G$ and 
$F \subseteq E(G)$, $F$ is an edge-cut of $G$ if $G-F$ is disconnected. 
As a fundamental property, it holds that 
$$\kappa(G) \leq \lambda(G) \leq \delta(G).$$
A graph $G$ is {\it maximally connected} when $\kappa(G) = \delta(G)$. 
Also, $G$ is {\it super edge-connected}
if every minimum edge-cut of $G$ isolates a vertex of $G$,
i.e., for any minimum edge-cut $F$, there exists a vertex $u$ with ${\rm deg}_G(u) = \delta(G)$
such that $F = \{ uv \in E(G)\ |\ v \in N_G(u)\}$.

A walk from a vertex $u = w_0$ to a vertex $v = w_\ell$ in a graph $G$ is a sequence 
$W = (w_0, w_1, \ldots, w_\ell)$ of vertices of $G$ such that every consecutive vertices 
are adjacent in $G$, where $\ell$ is the length of $W$ which is denote by $|W|$, i.e., $\ell = |W|$.
If there is no repeated vertex in a walk, then the walk is called a path.
A trivial path is a path of length 0. 
A cycle is a closed walk, i.e., $w_0 = w_\ell$ such that $(w_0,w_1,\ldots,w_{\ell-1})$
is a path where $\ell \geq 3$.
For a connected graph $G$ and $u,v \in V(G)$, the distance ${\rm dist}_G(u,v)$ 
between $u$ and $v$ in $G$ is the length of a shortest path between them in $G$.
The diameter ${\rm diam}(G)$ of a connected graph $G$ is defined to be 
$\max_{u,v \in V(G)}{\rm dist}_G(u,v)$.
For any positive integers $\Delta$ and $D$, it can be checked that any graph
with maximum degree $\Delta$ and diameter $D$ has order at most
$$1+\Delta+\Delta(\Delta-1)+\cdots+\Delta(\Delta-1)^{D-1}.$$
This upper bound on the order of a graph is called the {\it Moore bound} from which 
it follows that for any bounded-degree graphs of order $n$,
logarithmic diameter $O(\log{n})$ is asymptotically optimal.

Very large-scale communication networks have recently received attention
and a network called DCell has been proposed by Guo et al. \cite{Guo-et-al}. 
The DCell network denoted $D_{k,n}$ is a $(n+k-1)$-regular graph of 
order $t_{k,n}$, where $t_{k,n}$ is recursively defined as follows:
$$\left\{\begin{array}{lcl}
t_{0,n} & = & n, \\
t_{k,n} & = & t_{k-1,n}(t_{k-1,n}+1). 
\end{array} \right.$$
This recurrence follows from the structure of $D_{k,n}$;
$D_{0,n}$ is the complete graph of order $n$ and $D_{k,n}$ consists of
$(t_{k-1,n}+1)$ copies of $D_{k-1,n}$ such that for any two copies of $D_{k-1,n}$,
there is exactly one edge between them. 
Since the definition of $D_{k,n}$ is not so simple, we will omit it here. 
From the recurrence, the following bounds can be obtained \cite{Guo-et-al}.
$$ \left( n+ \frac{1}{2} \right)^{2^{k}} - \frac{1}{2} \leq t_{k,n} \leq (n+1)^{2^{k}}-1.$$
Moreover, it has been shown in \cite{Kli-et-al} that 
$t_{k,n}+\frac{1}{2} < c^{2^{k}} < t_{k,n}+\frac{3}{5}$, i.e, $t_{k,n} = \lfloor c^{2^{k}} \rfloor$, 
where 
$c = \left(n+\frac{1}{2}\right)
\prod_{i = 0}^{\infty}\left(1+\frac{1}{4\left(t_{i,n}+\frac{1}{2}\right)^{2}}\right)^{\frac{1}{2^{i+1}}}$; 
unfortunately, the closed form of the order of $D_{k,n}$ is unknown. 
Thus, $D_{k,n}$ has doubly exponential order. 
It has also been shown in \cite{Guo-et-al} that ${\rm diam}(D_{k,n}) \leq 2^{k+1}-1$.
Thus, $D_{k,n}$ has logarithmic diameter; however, the exact value of ${\rm diam}(D_{k,n})$
remains unknown. 
Logarithmic diameter is a desirable property from 
a point of communication delay. 
On the other hand, from a fault-tolerant point of view, maximally connectedness and 
super edge-connectedness are also known to be desirable properties for communication networks.
It has been proved that the DCell network $D_{k,n}$
is maximally connected \cite{Guo-et-al} 
and super edge-connected for all $k \geq 2$ and $n \geq 2$ \cite{L-W}. 
From these nice properties, the DCell network has been received particular attention
and its various properties have been studied
(e.g., see \cite{Li-et-al,Liu-et-al,Qin-et-al,Wang-et-al,Wang-et-al2,Wang-et-al3}).
In addition, as far as we know, no communication network of 
doubly exponential order with logarithmic diameter has been proposed until now,
except for the DCell network.

Many operations on graphs have been introduced in graph theory so far. 
For example, 
\begin{itemize}
\item unary operations: complementary, line graph, power, subdivided-line graph etc., 
\item binary operations: union, join, Cartesian product, lexicographic product, 
edge sum, etc.
\end{itemize}
These operations are actually treated in \cite{B-C-L,Ha}. 
Motivated by very large-scale communication networks, 
we newly define the {\it exponential operation} on graphs; 
for graphs $G$ and $H$, we denote by $G^H$ the exponential graph
with base $G$ and exponent $H$.
(The precise definition of $G^H$ will be presented in Section 2.) 
Note that ``exponentiation" of graphs is different from ``power" of graphs.
The exponential operation is indeed a binary operation on graphs, while
the $n$-th power $G^n$ of a graph $G$ can be considered as
a unary operation on graphs for any fixed positive integer $n$.
In this paper, we show the following structural results.
\begin{itemize}
\item Every connected exponential graph $G^H$ has logarithmic diameter
if the base $G$ has logarithmic diameter. 
\item Every connected exponential graph $G^H$ is maximally connected.
\item A connected exponential graph $G^H$ is 
super edge-connected if and only if $\delta(G) \geq 2$ or $H$ is not a complete graph,
and $\delta(H) \geq 2$ or $G$ is not a complete graph. 
\end{itemize}
Using the exponential operation, we can construct various maximally connected
and super edge-connected graphs of multi-exponential order with logarithmic diameter.
For example, for the complete graph $K_n$ of order $n$ 
and the $d$-ary de Bruijn graph $B(d,k)$, 
the exponential graph $K_{n}^{B(d,k)}$ is a maximally connected and super edge-connected 
graph of order $n^{d^{k}}d^{k}$ 
with $\Delta\left(K_{n}^{B(d,k)}\right) = n+2d-1$ and 
${\rm diam}\left(K_{n}^{B(d,k)}\right) \leq 2d^{k}+k-1$.  
Compared to the DCell network $D_{k,n}$, $K_{n}^{B(d,k)}$ has advantages that 
the order is explicitly determined and, moreover, 
it can be increased while fixing the maximum degree.

A cycle (respectively, path) containing every vertex of $G$ is called a Hamiltonian cycle
(respectively, Hamiltonian path).
If $G$ has a Hamiltonian cycle, then $G$ is {\it Hamiltonian}.
If for any two vertices in $G$, there exists a Hamiltonian path between them,
then $G$ is {\it Hamiltonian-connected}. 
In this paper, we present the following sufficient conditions to be Hamiltonian and 
to have edge-disjoint Hamiltonian cycles and completely independent spanning trees. 
(The definition of completely independent spanning trees will be presented in Section 5.)
\begin{itemize}
\item If $G$ is Hamiltonian, then $G^{K_2}$ is Hamiltonian.
\item If $G$ is even Hamiltonian and $H$ is Hamiltonian-connected,
then $G^{H}$ is Hamiltonian.
\item If $G$ is even Hamiltonian, then $G^{K_n}$ where $n \geq 4$ 
has two edge-disjoint Hamiltonian cycles and two completely independent spanning trees. 
\end{itemize}
It has been shown in \cite{Ch-et-al,Ka,Se} 
that the second power $G^2$ (respectively, third power $G^3$)
of a 2-connected (respectively, connected) graph $G$ is Hamiltonian-connected.
Combining these results and the above second result, we have the following.
\begin{itemize}
\item If $G$ is even Hamiltonian and $H$ is 2-connected 
(respectively, connected), then $G^{H^2}$ (respectively, $G^{H^3}$) is Hamiltonian.
\end{itemize}
The hypercube $Q_k$ is one of the most well-known communication networks.
The hypercube is, unfortunately, not Hamiltonian-connected, 
while several hypercube-like networks are known to be Hamitonian-connected.
For example, the M\"{o}bius cube $MQ_k$ is one of such networks.
From our results, 
the exponential graph $K_{n}^{MQ_k}$ for even $n$ 
is a maximally connected and super edge-connected $(n+k-1)$-regular 
Hamiltonian graph of order $2^{k}n^{2^{k}}$ with diameter $2^{k+1}$.
Compared to the DCell network $D_{n,k}$, 
$K_{n}^{MQ_k}$ has advantages that
not only the order but also the diameter is explicitly determined.

Since the exponential operation is not associative, 
there are many types of iterated exponential graphs for any given graph $G$.
In particular, we define the {\it exponential cube} $\Omega_k$ and 
{\it hyper-exponential cube} $\Psi_k$ as the following $k$-iterated exponential 
graphs of $K_2$:
$$\Omega_k = \underbrace{\left( \cdots \left(K_{2}^{K_{2}}\right)^{\iddots}\right)^{K_2}}_{k},
\hspace*{10mm}
\Psi_k = \underbrace{K_{2}^{K_{2}^{\iddots^{K_{2}}}}}_{k}. 
$$
These exponential graphs have actually no structure related to ``cube";
the terms are simply derived from an analogous to the hypercube which can be defined as
the Cartesian product of $k$ copies of $K_2$: 
$$Q_k = \underbrace{K_2 \times K_2 \times \cdots \times K_2}_{k}.$$
The exponential cube $\Omega_k$ ($k \geq 3$) is a maximally connected and super edge-connected 
$k$-regular Hamiltonian graph 
of order $2^{2^{k}-1}$ with diameter $3\cdot 2^{k-1}-2$.
From these properties, the exponential cube may be considered as a nice candidate 
for doubly exponential-scale networks. 
The hyper-exponential cube $\Psi_k$ ($k \geq 3$) is also a maximally connected and
super edge-connected $k$-regular graph; however, its order is enormous, i.e.,   
$|V(\Psi_k)| = O\left(\underbrace{2^{2^{\iddots^{2}}}}_{k}\right)$.
Although $\Psi_k$ seems to be impractical unless $k$ is small, it would be of theoretical interest;
in fact, as far as we know,
no such huge networks with logarithmic diameter have been proposed until now.

This paper is organized as follows.
Section 2 presents the definition of the exponential graphs
and their fundamental properties.
Results concerning diameter are given in Section 3.  
In Section 4, we show that every connected exponential graphs are maximally connected
and also present a necessary and sufficient condition to be super edge-connected.  
Sufficient conditions to be Hamiltonian and 
to have edge-disjoint Hamiltonian cycles and completely independent spanning trees
are given in Section 5.
In Section 6, we apply our results to constructions of various networks 
of multi-exponential order with logarithmic diameter
and also compare such networks with the DCell network.
Section 7 finally concludes the paper with several remarks.

\section{Exponential Graphs}

In this section, we first introduce the exponential operation on graphs.

\begin{definition}

Let $G$ and $H$ be graphs such that 
$V(H) = \{w_1,w_2,\ldots,w_q\}$.
The exponential graph $G^H$ with base $G$ and exponent $H$
is defined as follows.
\[ \left\{ \begin{array}{lcl}
V\left(G^H\right) & = & 
\left\{ (u_1,u_2,\ldots,u_q; w_j)\ |\ u_i \in V(G), 1 \leq i \leq q, 
1 \leq j \leq q \right\}, \\[5mm]
E\left(G^H\right) & = & 
\left\{ (u_1,u_2,\ldots,u_q; w_j)(v_1,v_2,\ldots,v_q; w_k)\ \left|\ 
\begin{array}{lll}
j = k, & u_{j}v_{j} \in E(G), & u_i = v_i \mbox{ for any } i \neq j \\[1mm]
& \mbox{ or } & \\[1mm] 
j \neq k, & w_{j}w_{k} \in E(H), & u_i = v_i \mbox{ for all } i
\end{array}
\right. \right\}.
\end{array} \right.
\]
\end{definition}

Note that even if we employ another vertex-labeling $\{w'_1,w'_2,\ldots,w'_q\}$ for $V(H)$,
the structure of $G^H$ remains the same in the sense that
the resultant graphs are isomorphic.
It is essential to fix the correspondence between the vertices of $H$
and the positions of vertices of $G$ in the $(q+1)$-tuples representing the vertices of $G^H$.
We call an edge by the first (respectively, second) condition in the above adjacency rule of $G^H$
a $G$-edge of dimension $j$ or simply a $j$-th $G$-edge
(respectively, an $H$-edge).
For $x \in V(G^H)$, the set of vertices adjacent to $x$
through $G$-edges and $H$-edges are denoted by $N_{G^H}(x; G)$ and 
$N_{G^H}(x; H)$, respectively. 
A path consisting of only $G$-edges (respectively, $H$-edges) in $G^H$
is called a $G$-path (respectively, $H$-path). 
By the definition, when $G$ or $H$ is trivial, i.e, $G \cong K_1$ or $H \cong K_1$,
we have $K_{1}^{H} \cong H$ and $G^{K_1} \cong G$.

\begin{proposition} \label{Prop1}
Let $G$ and $H$ be graphs of orders $p$ and $q$, respectively. 
Then, 
\begin{enumerate}
\item $\left|V\left(G^H\right)\right| = p^{q}q$,
\item $\left|E\left(G^H\right)\right| = p^{q-1}\left(q|E(G)|+p|E(H)|\right)$,
\item $\delta\left(G^H\right) = \delta(G) + \delta(H)$,
\item $\Delta\left(G^H\right) = \Delta(G) + \Delta(H)$.
\end{enumerate}
\end{proposition}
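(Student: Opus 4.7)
The plan is to derive each statement by direct counting from the definition, anchored on one structural observation: the two clauses in the adjacency rule of $G^H$ are mutually exclusive, since a $G$-edge keeps the last coordinate $w_j$ fixed whereas an $H$-edge strictly changes it. Consequently, at every vertex $x$ the neighbourhood splits as a disjoint union $N_{G^H}(x) = N_{G^H}(x;G) \cup N_{G^H}(x;H)$, and the two pieces can be tallied independently.

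First I would dispatch (1) instantly: a vertex of $G^H$ is a $(q{+}1)$-tuple in which the first $q$ entries range freely over $V(G)$ and the last entry is one of $w_1,\ldots,w_q$, so there are $p^{q}\cdot q$ vertices in total. For (3) and (4) I would fix an arbitrary $x=(u_1,\ldots,u_q;w_j)$ and read off from the definition that a $G$-neighbour of $x$ is obtained by replacing $u_j$ with some $v\in N_G(u_j)$ (giving $|N_{G^H}(x;G)|={\rm deg}_G(u_j)$), while an $H$-neighbour of $x$ is obtained by replacing $w_j$ with some $w_k\in N_H(w_j)$ (giving $|N_{G^H}(x;H)|={\rm deg}_H(w_j)$). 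Hence ${\rm deg}_{G^H}(x)={\rm deg}_G(u_j)+{\rm deg}_H(w_j)$, and since the coordinates $u_j\in V(G)$ and $w_j\in V(H)$ can be chosen independently, minimising and maximising this sum yields $\delta(G^H)=\delta(G)+\delta(H)$ and $\Delta(G^H)=\Delta(G)+\Delta(H)$.

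For (2) I would apply the handshake lemma to the degree formula just obtained. The sum $\sum_{x}{\rm deg}_G(u_j)$ decomposes, by fixing the dimension index $j$ first and then the $q-1$ free coordinates $u_i$ ($i\ne j$), as $\sum_{j=1}^{q} p^{q-1}\sum_{u\in V(G)}{\rm deg}_G(u) = q\cdot p^{q-1}\cdot 2|E(G)|$. Similarly, $\sum_{x}{\rm deg}_H(w_j) = p^{q}\sum_{j=1}^{q}{\rm deg}_H(w_j) = p^{q}\cdot 2|E(H)|$. Halving the combined total yields $|E(G^H)| = q\,p^{q-1}|E(G)| + p^{q}|E(H)| = p^{q-1}\bigl(q|E(G)|+p|E(H)|\bigr)$.

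I do not foresee a genuine obstacle: the whole proposition reduces to bookkeeping once the $G$-edges and $H$-edges are known to be disjoint and the degree at $x$ is seen to depend only on the two ``active'' coordinates $u_j$ and $w_j$. The only care needed is to keep the indices straight in the double sums of (2), ensuring that the factor $q$ arises from the dimension index and the factor $p^{q-1}$ from the frozen coordinates.
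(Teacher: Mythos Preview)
Your proposal is correct and matches the paper's argument almost verbatim for (1), (3), and (4): the paper also observes that ${\rm deg}_{G^H}(x)={\rm deg}_G(u_j)+{\rm deg}_H(w_j)$ and reads off the extremes. The only cosmetic difference is in (2): the paper counts $G$-edges and $H$-edges directly (there are $p^{q-1}|E(G)|$ $G$-edges of each dimension $j$, and $p^q|E(H)|$ $H$-edges in total), whereas you route through the handshake lemma---both arrive at the same expression with equal ease.
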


\begin{proof}
The first fact is clear from the definition of $V\left(G^H\right)$.
The number of $j$-th $G$-edges is $p^{q-1}|E(G)|$ for each $1 \leq j \leq q$,
while the number of $H$-edges is $p^{q}|E(H)|$.
Thus, the second fact holds.
Each vertex $x = (u_1,u_2,\ldots,u_q; w_j)$ in $G^H$ is adjacent to
${\rm deg}_{G}(u_j)$ $G$-edges and ${\rm deg}_{H}(w_j)$ $H$-edges, i.e.,
${\rm deg}_{G^H}(x) = {\rm deg}_{G}(u_j) + {\rm deg}_{H}(w_j)$.
Therefore, we have the third and fourth facts. 
\end{proof}

\bigskip

For a $q$-tuple $u = (u_1,u_2,\ldots,u_q)$ where $u_i \in V(G)$ for $1 \leq i \leq q$,
we denote by $H_u$ 
the subgraph of $G^H$ induced by 
the set $\{ (u_1,u_2,\ldots,u_q; w_j)\ |\ w_j \in V(H) \}$, i.e., 
$$H_u = \langle \{ (u_1,u_2,\ldots,u_q; w_j)\ |\ w_j \in V(H) \} \rangle_{G^H}.$$
From the definition, it follows that $H_u \cong H$ and every edge of $H_u$ is an $H$-edge.
For a vertex $x = (u_1,u_2,\ldots,u_q; w_j) \in V(G^H)$, we introduce the following notations: 
\begin{itemize}
\item $\rho(x) =  (u_1,u_2,\ldots,u_q)$,
\item $\rho_i(x) = u_i$ for $1 \leq i \leq q$,
\item $\sigma(x) = j$.
\end{itemize}
Using these notations, we can say that 
for any $x \in V(G^H)$, $x$ is a vertex of $H_{\rho(x)}$ and 
$x$ is incident with ${\rm deg}_H(w_{\sigma(x)})$ $H$-edges
and ${\rm deg}_G(\rho_{\sigma(x)}(x))$ $G$-edges of dimension $\sigma(x)$. 
That is, $|N_{G^H}(x; H)| = {\rm deg}_H(w_{\sigma(x)})$ and 
$|N_{G^H}(x; G)| = {\rm deg}_G(\rho_{\sigma(x)}(x))$. 
For any $G$-edge (respectively, $H$-edge) $xy \in E(G^H)$,
it holds that $\rho(x) \neq \rho(y)$ and $\sigma(x) = \sigma(y)$
(respectively, $\rho(x) = \rho(y)$ and $\sigma(x) \neq \sigma(y)$).

The exponential operation is related to the Cartesian product operation.
The {\it Cartesian product} of $G$ and $H$ denoted $G \times H$
is defined as follows.
\[ \left\{ \begin{array}{lcl}
V(G \times H) & = & \{ (u, w)\ |\ u \in V(G), w \in V(H) \}, \\[3mm]
E(G \times H) & = & \left\{ (u, w)(u',w') \left|\ 
\begin{array}{l}
uu' \in E(G), w = w' \\[1mm]
\mbox{ or } \\[1mm]
u = u', ww' \in E(H) 
\end{array}
\right. \right\}.
\end{array} \right.
\]
Unlike the exponential operation, the Cartesian product is associative.
We then denote by $G^{[n]}$ 
the Cartesian product of $n$ copies of a graph $G$:
$$ G^{[n]} = \underbrace{G \times G \times \cdots \times G}_{n}.$$
By the definition, $G^{[n]}$ can also be defined as follows.
\[ \left\{ \begin{array}{lcl}
V(G^{[n]}) & = & 
\{ (u_1,u_2,\ldots,u_n)\ |\ u_i \in V(G), 1 \leq i \leq n \}, \\
E(G^{[n]}) & = & 
\{ (u_1,u_2,\ldots,u_n)(v_1,v_2,\ldots,v_n)\ |\ 
u_{j}v_{j} \in E(G) \mbox{ for some } j,\ u_i = v_i \mbox{ for any } i \neq j \}.
\end{array} \right.
\]
Thus, $G^{[n]}$ is a graph of order $|V(G)|^n$ such that
$\delta(G^{[n]}) = n \cdot \delta(G)$ and $\Delta(G^{[n]}) = n \cdot \Delta(G)$. 
An edge $(u_1,u_2,\ldots,u_n)(v_1,v_2,\ldots,v_n)$ where 
$u_{j}v_{j} \in E(G)$ in $G^{[n]}$ is called an edge of dimension $j$.

\begin{figure}[t]
\centering
\epsfig{file=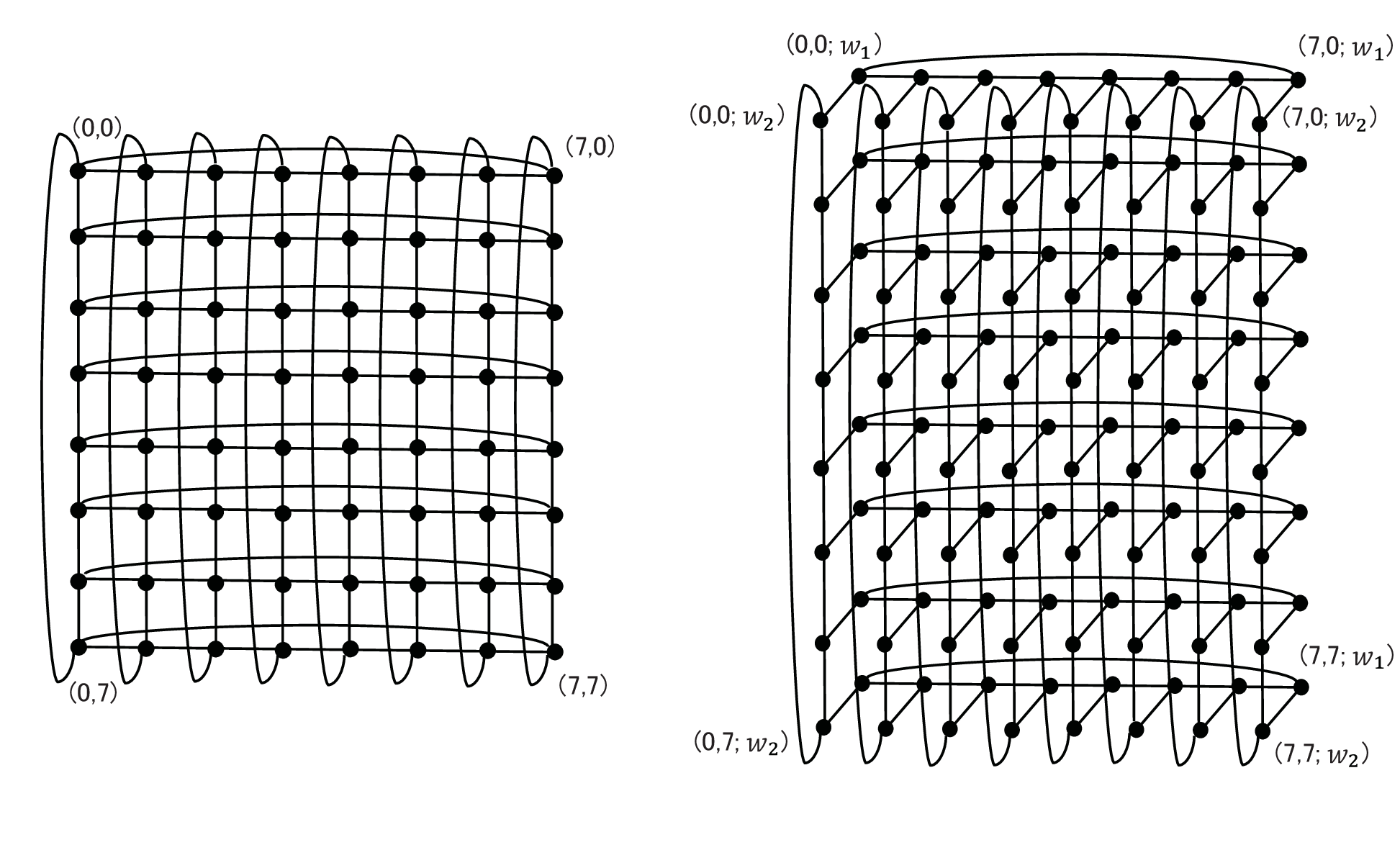,height=70mm}
\caption{The Cartesian product graph $C_8^{[2]}$ and the exponential graph $C_8^{K_2}$.}
\end{figure}

Let $V(H) = \{w_1,w_2,\ldots,w_{q} \}$. 
We can obtain $G^H$ from $G^{[q]}$ by replacing each vertex 
$u$ with a copy $H'$ of $H$ with $V(H') = \{(u;w_1),(u;w_2),\ldots,(u;w_{q}) \}$
in such a way that 
every edge of dimension $j$ incident with $u$ is incident with $(u;w_j)$ as a $G$-edge of 
dimension $j$ for each $1 \leq j \leq q$.
Conversely, given an exponential graph $G^H$,
by contracting the vertices in $V(H_u)$ into the single vertex $u$, 
we have the Cartesian product graph $G^{[q]}$. 

Let $C_n$ denote a cycle of order $n$ such that $V(C_n) = \{0,1,2,\ldots,n-1\}$
and $E(C_n) = \{ 01, 12, 23,\ldots,$ $(n-1)0 \}$.
Fig. 1 shows the Cartesian product graph $C_8^{[2]}$ and the exponential graph $C_8^{K_2}$,
where $V(K_2) = \{w_1,w_2\}$. 
We can see that $C_8^{K_2}$ is obtained from $C_8^{[2]}$ by replacing
each vertex with a copy of $K_2$ so that 
every edge of dimension $j$ incident with $(u_1,u_2)$ is incident with $(u_1,u_2; w_j)$ as 
a $G$-edge of dimension $j$, 
while $C_8^{[2]}$ is obtained from $C_8^{K_2}$ by contracting each copy of $K_2$
into a single vertex.

\section{Diameter}

Let $G$ be a connected graph such that $u,v \in V(G)$ and $S \subseteq V(G)$.
We denote by ${\rm dist}_G(u,v; S)$ the length of a shortest walk from $u$ to $v$
in $G$ which contains every vertex in $S$. 
Note that ${\rm dist}_G(u,v; \emptyset) = {\rm dist}_G(u,v)$.
In particular, 
we call ${\rm dist}_G(u,v; V(G))$ the {\it Hamiltonian-distance} between $u$ and $v$ in $G$.
We then define the {\it Hamiltonian-diameter} ${\rm diam}^\ast(G)$ of a graph $G$ 
as ${\rm diam}^\ast(G) = $ $\max_{u,v \in V(G)}{\rm dist}_G(u,v;V(G))$.

\begin{proposition} \label{Prop2}
For every nontrivial connected graph $H$, 
$$|V(H)| \leq {\rm diam}^\ast(H) \leq 2|V(H)|-2.$$
\end{proposition}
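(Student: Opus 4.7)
The plan is to prove the two inequalities separately. For the lower bound, I would exploit the fact that the maximum in the definition of ${\rm diam}^\ast(H)$ ranges over all pairs $(u,v)$ including $u=v$; for the upper bound, I would use a spanning-tree detour argument.

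\textbf{Lower bound.} Pick any vertex $u \in V(H)$ and consider ${\rm dist}_H(u,u;V(H))$, the length of a shortest closed walk from $u$ passing through every vertex of $H$. Such a walk $(w_0,w_1,\ldots,w_\ell)$ with $w_0 = w_\ell = u$ occupies $\ell+1$ positions. Since $H$ is nontrivial, the walk is nontrivial, so $u$ occurs at least at positions $0$ and $\ell$; moreover each of the remaining $|V(H)|-1$ vertices must appear at least once. Hence $\ell+1 \geq 2 + (|V(H)|-1) = |V(H)|+1$, so $\ell \geq |V(H)|$. Taking the maximum over all pairs yields ${\rm diam}^\ast(H) \geq |V(H)|$.

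\textbf{Upper bound.} Let $n = |V(H)|$, fix any spanning tree $T$ of $H$, and fix $u,v \in V(H)$. Let $P = (p_0,p_1,\ldots,p_k)$ be the unique path in $T$ from $u = p_0$ to $v = p_k$, where $k = {\rm dist}_T(u,v)$. Deleting the $k$ edges of $P$ from $T$ splits $T$ into $k+1$ subtrees $T_0,T_1,\ldots,T_k$, where $p_i \in V(T_i)$; the edge counts satisfy $\sum_{i=0}^{k} |E(T_i)| = (n-1) - k$. Now construct the following walk from $u$ to $v$: for $i = 0,1,\ldots,k$, perform a depth-first traversal of $T_i$ starting and ending at $p_i$ (which has length $2|E(T_i)|$ and visits every vertex of $T_i$), and if $i < k$ traverse the edge $p_i p_{i+1}$ of $P$. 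This is a walk from $u$ to $v$ that visits every vertex of $T$, hence every vertex of $H$, and its length is
\[
k + 2\sum_{i=0}^{k} |E(T_i)| = k + 2(n-1-k) = 2n - 2 - k \leq 2n - 2.
\]
Therefore ${\rm dist}_H(u,v;V(H)) \leq 2|V(H)|-2$ for every $u,v$, giving ${\rm diam}^\ast(H) \leq 2|V(H)|-2$.

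Neither direction should present any real obstacle: the lower bound is a pigeonhole on walk positions, and the upper bound is the standard ``Euler-tour of a doubled spanning tree, shortened by the $u$-$v$ path'' construction. The only point requiring a line of care is noting that $u$ genuinely appears twice in the closed walk when $H$ is nontrivial (so the bound is $|V(H)|$ rather than $|V(H)|-1$), and in the upper bound that the $k+1$ subtrees obtained by deleting $E(P)$ partition the edges of $T \setminus P$ correctly so that the edge-count identity holds.
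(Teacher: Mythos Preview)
Your proof is correct and follows essentially the same approach as the paper: the lower bound via the closed walk ${\rm dist}_H(u,u;V(H))\geq |V(H)|$ is identical, and the upper bound is the same spanning-tree/doubled-edges detour shortened by a $u$--$v$ path. The only cosmetic difference is that the paper first takes a shortest $u$--$v$ path in $H$ and then builds a spanning tree of the contraction, whereas you fix a spanning tree of $H$ first and use the tree path; both yield a walk of length $2|V(H)|-2$ minus the path length, so the bound is the same.
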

 
\begin{proof}
The lower bound of $|V(H)|$ on ${\rm diam}^\ast(H)$ follows from the fact
that ${\rm dist}_H(u,u; V(H)) \geq |V(H)|$ for $u \in V(H)$. 
For any $u,v \in V(H)$, 
by the following algorithm, we can construct a walk from $u$ to $v$ which contains 
every vertex of $H$, from which the upper bound on ${\rm diam}^\ast(H)$ is obtained.
\begin{enumerate}
\item Construct a shortest path $P$ from $u$ to $v$ in $H$.
\item Contract all the vertices on $P$ into a single vertex $w$.
Let $H'$ be the resultant graph which may have multiple edges. 
\item Construct a spanning tree $T$ of $H'$.
\item Based on $T$, construct a closed walk $W$ from $w$ to $w$ 
of length $2|E(T)| = 2(|V(H)|-|P|-1)$ which contains every vertex of $H'$. 
\item Based on $W$, construct a walk $W'$ from $u$ to $v$ of length 
$|W|+|P| = 2(|V(H)|-1)-|P|$ which contains every vertex of $H$. 
\end{enumerate}
From the above construction, $|W'| \leq 2|V(H)|-2$.
Therefore, we have ${\rm diam}^\ast(H) \leq 2|V(H)|-2$.
\end{proof}

\bigskip

If $H$ is Hamiltonian-connected, then we have 
${\rm diam}^\ast(H) = |V(H)|$, while 
if $H$ is a tree, then ${\rm diam}^\ast(H) = 2|V(H)|-2$.
In this sense, both the upper and lower bounds in Proposition \ref{Prop2} are best possible. 
Note that if $H$ is Hamiltonian, then it holds that ${\rm diam}^\ast(H) \leq |V(H)|-1+{\rm diam}(H)$.

\begin{theorem} \label{DiamTh}
Let $G$ and $H$ be nontrivial connected graphs. 
Then, 
$${\rm diam}\left(G^H\right) = {\rm diam}(G) \cdot |V(H)|+{\rm diam}^\ast(H).$$
\end{theorem}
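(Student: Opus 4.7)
The plan is to establish matching upper and lower bounds, ${\rm diam}(G^H) \leq {\rm diam}(G)\cdot|V(H)| + {\rm diam}^\ast(H)$ by an explicit walk construction and the reverse inequality by exhibiting a specific pair of vertices attaining this distance. The key preliminary observation, which I would record first, is that an arbitrary walk $W$ from $x$ to $y$ in $G^H$ decomposes naturally by edge type: the sub-sequence of $G$-edges of dimension $j$, read off through $\rho_j$, forms a walk in $G$ from $\rho_j(x)$ to $\rho_j(y)$ (because $H$-edges preserve all $\rho$-coordinates), while the sequence of $\sigma$-values along $W$ changes only across $H$-edges and hence yields a walk in $H$ from $w_{\sigma(x)}$ to $w_{\sigma(y)}$. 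Moreover, because every $G$-edge of dimension $j$ can be traversed only when $\sigma = j$, this $H$-projection is forced to contain $w_j$ whenever $W$ uses at least one $G$-edge of dimension $j$.

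For the upper bound, I would fix $x, y \in V(G^H)$ and take a shortest walk $W_H$ in $H$ from $w_{\sigma(x)}$ to $w_{\sigma(y)}$ passing through every vertex of $H$; by Proposition \ref{Prop2} its length is at most ${\rm diam}^\ast(H)$. I then simulate $W_H$ inside $G^H$, and at the first moment the pointer arrives at each $w_j$, I insert a block of at most ${\rm diam}(G)$ $G$-edges of dimension $j$ realising a geodesic from $\rho_j(x)$ to $\rho_j(y)$ in $G$. Summing contributions gives a walk from $x$ to $y$ of total length at most $|V(H)|\cdot{\rm diam}(G) + {\rm diam}^\ast(H)$.

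For the lower bound, I would choose $u^\ast, v^\ast \in V(G)$ with ${\rm dist}_G(u^\ast,v^\ast)={\rm diam}(G)$ and indices $j^\ast, k^\ast$ with ${\rm dist}_H(w_{j^\ast},w_{k^\ast};V(H))={\rm diam}^\ast(H)$, then set $x=(u^\ast,\ldots,u^\ast;w_{j^\ast})$ and $y=(v^\ast,\ldots,v^\ast;w_{k^\ast})$. For any walk from $x$ to $y$, the dimension-$j$ $G$-edge projection is a walk in $G$ from $u^\ast$ to $v^\ast$, so contains at least ${\rm diam}(G)$ edges for each of the $|V(H)|$ dimensions; since this count is strictly positive for every $j$, the earlier observation forces the $H$-projection to visit every $w_j$, hence to have length at least ${\rm dist}_H(w_{j^\ast},w_{k^\ast};V(H)) = {\rm diam}^\ast(H)$. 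Summing both contributions matches the upper bound.

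The main obstacle I expect is making the projection bookkeeping precise without becoming unwieldy. In particular, the $G$-edges of a given dimension $j$ may appear in several disjoint blocks of $W$ separated by $H$-edges and by $G$-edges of other dimensions, yet because $H$-edges fix $\rho$ and $G$-edges of dimension $i \neq j$ fix $\rho_j$, concatenating these blocks really does produce a single well-defined walk in $G$ from $\rho_j(x)$ to $\rho_j(y)$; a dual remark is needed to see that the successive distinct $\sigma$-values traversed in $W$ form a genuine walk in $H$. Once that structural lemma is isolated cleanly, both the construction in the upper bound and the counting in the lower bound reduce to straightforward additive estimates.
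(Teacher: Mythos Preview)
Your proposal is correct and follows essentially the same route as the paper: both build the upper bound by tracing a spanning walk in $H$ and inserting $G$-geodesics at each coordinate, and both realise the lower bound on the pair $x=(u^\ast,\ldots,u^\ast;w_{j^\ast})$, $y=(v^\ast,\ldots,v^\ast;w_{k^\ast})$. The paper is terser---it simply asserts that this pair attains the claimed distance---whereas your projection bookkeeping (each dimension-$j$ block projects to a $u^\ast$--$v^\ast$ walk in $G$, forcing the $\sigma$-sequence to visit every $w_j$) is exactly the argument needed to make that assertion rigorous.
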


\begin{proof}
Let $x = (u_1,u_2,\ldots,u_q; w_j)$ and $y = (v_1,v_2,\ldots,v_q; w_k)$
be vertices in $G^H$.
Let 
$$D = \{w_i \in V(H)\ |\ u_i \neq v_i \} =
\{ w_{i_1}, w_{i_2}, \ldots, w_{i_t} \}.$$ 
Also, let 
$W_D$ be a shortest walk from $w_j$ to $w_k$ in $H$ which contains every vertex in $D$.
We may assume, without loss of generality, that 
$w_{i_1}, w_{i_2}, \ldots, w_{i_t}$ appear in $W_D$ in this order.
According to the following algorithm where we assume that $t = 0$ when $D = \emptyset$, 
we can obtain a shortest path from $x$ to $y$
in $G^H$.
\begin{enumerate}
\item Set $\ell := 1$, $i_0 := j$ and $i_{t+1} := k$.
\item Based on $W_D$, construct a path $P_{H,\ell}$ from 
$(u_1,u_2, \ldots, u_q; w_{i_{\ell-1}})$ to $(u_1,u_2, \ldots, u_q; w_{i_\ell})$. 
\item If $\ell \leq t$, then 
\begin{enumerate}
\item[3.1.] construct a path $P_{G,\ell}$ from
$(u_1,\ldots, u_{i_\ell}, \ldots, u_q; w_{i_{\ell}})$ to $(u_1,\ldots, v_{i_\ell}, \ldots, u_q; w_{i_\ell})$
based on a shortest path from $u_{i_\ell}$ to $v_{i_\ell}$ in $G$, 
\item[3.2.] set $\ell := \ell+1$ and go to Step 2.
\end{enumerate}
\item By concatenating the paths $P_{H,1}$, $P_{G,1}$, $P_{H,2}, \ldots, P_{G,t}$ and $P_{H, t+1}$ 
in this order, construct a path $P^\ast$ from $x$ to $y$ in $G^H$.
\end{enumerate}
Note that $P_{H,1}$ or $P_{H, t+1}$ may be trivial.
From the above construction, 
$$|P^\ast| = |W_D| + \sum_{\ell = 1}^{t}|P_{G,i}| = 
{\rm dist}_{H}(w_j, w_k; D)+ \sum_{\ell = 1}^{t} {\rm dist}_G(u_{i_\ell},v_{i_\ell}).$$
Thus, we have ${\rm diam}\left(G^H\right) \leq {\rm diam}^\ast(H)+{\rm diam}(G) \cdot |V(H)|$
and in fact we can select two vertices in $G^H$ whose distance 
equals ${\rm diam}^\ast(H)+{\rm diam}(G) \cdot |V(H)|$.
\end{proof}

\bigskip

From Proposition \ref{Prop2} and Theorem \ref{DiamTh},
we have the following corollary.

\begin{corollary} \label{DiamCor1}
Let $G$ and $H$ be nontrivial connected graphs. 
Then, 
$$({\rm diam}(G)+1) \cdot |V(H)| \leq {\rm diam}\left(G^H\right) \leq ({\rm diam}(G)+2) \cdot |V(H)|-2.$$
\end{corollary}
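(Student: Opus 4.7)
The plan is to obtain this corollary by directly substituting the bounds from Proposition \ref{Prop2} into the exact formula provided by Theorem \ref{DiamTh}. Since both $G$ and $H$ are nontrivial and connected, both results apply without qualification.

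First I would invoke Theorem \ref{DiamTh} to rewrite ${\rm diam}(G^H)$ as ${\rm diam}(G) \cdot |V(H)| + {\rm diam}^\ast(H)$. Next, I would apply the two-sided estimate of Proposition \ref{Prop2}, namely $|V(H)| \leq {\rm diam}^\ast(H) \leq 2|V(H)|-2$, to the Hamiltonian-diameter term. Substituting the lower bound $|V(H)|$ yields ${\rm diam}(G^H) \geq {\rm diam}(G) \cdot |V(H)| + |V(H)| = ({\rm diam}(G)+1) \cdot |V(H)|$, and substituting the upper bound $2|V(H)|-2$ yields ${\rm diam}(G^H) \leq {\rm diam}(G) \cdot |V(H)| + 2|V(H)| - 2 = ({\rm diam}(G)+2) \cdot |V(H)| - 2$, which is exactly the claimed pair of inequalities.

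There is essentially no obstacle here; the corollary is a one-line consequence obtained by chaining the two preceding results. The only thing worth double-checking is that Proposition \ref{Prop2} genuinely applies to $H$ in both directions, which it does because nontriviality of $H$ was assumed. I would therefore write the proof as a single short paragraph of the form \emph{``By Theorem~\ref{DiamTh} and Proposition~\ref{Prop2}, \ldots''} with the two substitutions displayed.
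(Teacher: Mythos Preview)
Your proposal is correct and matches the paper's approach exactly: the paper states the corollary as an immediate consequence of Proposition~\ref{Prop2} and Theorem~\ref{DiamTh} without further elaboration, which is precisely the substitution you describe.
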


Assuming that $H$ is a tree, Hamiltonian or Hamiltonian-connected, we also have the following.
In particular, when $H$ is Hamiltonian, based on a fixed Hamiltonian cycle in $H$,
a simple routing algorithm for vertices of $G^H$ is obtained from the proof of Theorem \ref{DiamTh}.  

\begin{corollary} \label{DiamCor2}
Let $G$ and $H$ be nontrivial connected graphs.
\begin{enumerate}
\item If $H$ is a tree, then 
${\rm diam}\left(G^H\right) = ({\rm diam}(G)+2) \cdot |V(H)|-2$. 
\item If $H$ is Hamiltonian, then
${\rm diam}\left(G^H\right) \leq \left({\rm diam}(G)+1\right) \cdot |V(H)| + {\rm diam}(H)-1$. 
\item If $H$ is Hamiltonian-connected, then 
${\rm diam}\left(G^H\right) = ({\rm diam}(G)+1) \cdot |V(H)|$. 
\end{enumerate}
\end{corollary}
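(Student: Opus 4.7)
The plan is to invoke Theorem~\ref{DiamTh}, which states that ${\rm diam}(G^H) = {\rm diam}(G) \cdot |V(H)| + {\rm diam}^\ast(H)$, and to evaluate ${\rm diam}^\ast(H)$ in each of the three cases; substitution into Theorem~\ref{DiamTh} then yields the three stated formulas essentially for free.

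For part 3, if $H$ is Hamiltonian-connected then for any distinct $u,v \in V(H)$ there is a Hamiltonian $u$-$v$ path, which is a walk of length $|V(H)|-1$ from $u$ to $v$ containing every vertex of $H$; for $u=v$, a Hamiltonian cycle through $u$ is a closed walk of length $|V(H)|$ visiting every vertex. Hence ${\rm diam}^\ast(H) \leq |V(H)|$, and the matching lower bound is Proposition~\ref{Prop2}. Thus ${\rm diam}^\ast(H) = |V(H)|$ and part 3 follows.

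For part 2, I would justify the inequality ${\rm diam}^\ast(H) \leq |V(H)|-1+{\rm diam}(H)$ stated in the remark after Proposition~\ref{Prop2} by fixing a Hamiltonian cycle $C$ of $H$ and, for any $u,v \in V(H)$, traversing $C$ from $u$ until every vertex has been visited (using length at most $|V(H)|-1$) and then following a shortest $H$-path of length at most ${\rm diam}(H)$ from the current vertex to $v$. Substituting into Theorem~\ref{DiamTh} gives the stated upper bound.

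For part 1, the upper bound ${\rm diam}^\ast(H) \leq 2|V(H)|-2$ is Proposition~\ref{Prop2}, so substitution immediately delivers the upper bound on ${\rm diam}(G^H)$. The main obstacle is the matching lower bound ${\rm diam}^\ast(H) \geq 2|V(H)|-2$ when $H$ is a tree. I would take $u=v$ to be any vertex of $H$ and show that every closed walk $W$ from $u$ visiting all vertices must traverse each edge of $H$ at least twice: for any edge $e$, removing $e$ splits $H$ into two subtrees, and since $W$ is a closed walk the number of crossings of $e$ is even, while $W$ must visit both subtrees, forcing at least two crossings. Summing over the $|V(H)|-1$ edges gives $|W| \geq 2|V(H)|-2$, so ${\rm diam}^\ast(H) \geq 2|V(H)|-2$, and substitution into Theorem~\ref{DiamTh} yields part 1. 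The routine content is the substitution; the only mildly nontrivial step is this edge-parity argument for the tree lower bound.
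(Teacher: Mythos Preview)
Your proposal is correct and follows essentially the same approach as the paper: the paper states (without proof, in the remarks after Proposition~\ref{Prop2}) that ${\rm diam}^\ast(H)=|V(H)|$ when $H$ is Hamiltonian-connected, ${\rm diam}^\ast(H)=2|V(H)|-2$ when $H$ is a tree, and ${\rm diam}^\ast(H)\leq |V(H)|-1+{\rm diam}(H)$ when $H$ is Hamiltonian, and Corollary~\ref{DiamCor2} is then immediate from Theorem~\ref{DiamTh}. Your edge-parity argument for the tree lower bound and your cycle-then-shortest-path argument for the Hamiltonian case simply supply the details the paper omits.
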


From Corollary \ref{DiamCor1}, the following theorem is obtained.

\begin{theorem}
Let $G$ and $H$ be nontrivial connected graphs.
If $G$ has logarithmic diameter, then 
$G^H$ has logarithmic diameter.
\end{theorem}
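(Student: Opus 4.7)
The plan is to derive the theorem as a near-immediate corollary of Corollary \ref{DiamCor1} and Proposition \ref{Prop1}. First I would set $p = |V(G)|$ and $q = |V(H)|$ and translate the hypothesis into an explicit bound of the form ${\rm diam}(G) \leq c \log p + c'$ for some constants $c, c' > 0$. From Proposition \ref{Prop1} I would extract the order formula $|V(G^H)| = p^q q$, so that
$$\log |V(G^H)| = q \log p + \log q,$$
which serves as the yardstick against which ${\rm diam}(G^H)$ has to be compared.

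Next I would invoke the upper bound in Corollary \ref{DiamCor1} to obtain
$${\rm diam}(G^H) \leq ({\rm diam}(G) + 2) \cdot q - 2,$$
and substitute the assumed bound on ${\rm diam}(G)$ to get a majorant of the form $c q \log p + O(q)$. The final step is then a direct comparison: since $G$ is nontrivial and connected, $p \geq 2$, whence $\log p \geq 1$, and therefore $q \leq q \log p \leq \log |V(G^H)|$. This lets me absorb the linear $O(q)$ term into $O(\log |V(G^H)|)$, while the dominant term $c q \log p$ is bounded by $c \log |V(G^H)|$. Putting these together yields ${\rm diam}(G^H) = O(\log |V(G^H)|)$, as required.

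There is essentially no obstacle; the only point worth flagging is that one must bound $q$ by $\log |V(G^H)|$ itself (rather than by $\log q$ or $\log p$), and this is where the nontriviality of $G$ enters. The structural content is that the exponential operation blows up the order by a factor whose logarithm is exactly $q \log p + \log q$, while Corollary \ref{DiamCor1} blows up the diameter by a factor of order $q \log p$ at most, so the two blow-ups match and ``logarithmic in the order'' is preserved.
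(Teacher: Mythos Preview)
Your proposal is correct and follows essentially the same approach as the paper's own proof: both compute $\log|V(G^H)| = q\log p + \log q$ from the order formula, invoke the upper bound of Corollary~\ref{DiamCor1}, and conclude. Your version is in fact more carefully spelled out than the paper's, which simply writes the big-$O$ conclusion directly without isolating the $O(q)$ term; the only minor quibble is that ``$\log p \geq 1$'' presumes base-$2$ logarithms, but since $\log p \geq \log 2 > 0$ is a positive constant this is immaterial to the big-$O$ estimate.
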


\begin{proof}
Since $|V(G^H)| = |V(G)|^{|V(H)|} \cdot |V(H)|$, 
$$\log{\left|V\left(G^H \right)\right|} = |V(H)| \cdot \log{|V(G)|}+\log{|V(H)|}.$$
From Corollary \ref{DiamCor1}, ${\rm diam}\left(G^H\right) \leq ({\rm diam}(G)+2) \cdot |V(H)|-2$.
Thus, if $G$ has logarithmic diameter, i.e., ${\rm diam}(G) = O(\log{|V(G)|})$, then 
$${\rm diam}\left(G^H\right) = O\left(\log{\left|V\left(G^H \right)\right|}  \right),$$
i.e., $G^H$ has logarithmic diameter. 
\end{proof}

\section{Connectivity}

As mentioned in Section 2, the structure of an exponential graph $G^H$ is related to
that of the Cartesian product graph $G^{[|V(H)|]}$.
The connectivity of a Cartesian product graph $G \times H$ has been determined by 
\v{S}pacapan as follows.

\begin{theorem}  {\rm (\v{S}pacapan \cite{Sp})} \label{SpT}
For any nontrivial graphs $G$ and $H$, 
$$\kappa(G \times H) = \min\{ \kappa(G) \cdot |V(H)|,\ \kappa(H) \cdot |V(G)|,\ 
\delta(G)+\delta(H) \}.$$
\end{theorem}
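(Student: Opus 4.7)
The plan is to prove matching upper and lower bounds. For the upper bound, I would exhibit three explicit vertex cuts of $G\times H$. If $S$ is a minimum vertex cut of $G$, then $S\times V(H)$ is a vertex cut of size $\kappa(G)\cdot|V(H)|$: every edge of $G\times H$ either lies inside a copy $G_w$ (where $S\times\{w\}$ separates exactly as $S$ separates in $G$) or is an $H$-edge preserving the first coordinate, hence cannot cross the $G$-cut. A symmetric construction yields a cut of size $\kappa(H)\cdot|V(G)|$. Finally, ${\rm deg}_{G\times H}(u,w) = {\rm deg}_G(u)+{\rm deg}_H(w)$ gives $\delta(G\times H) = \delta(G)+\delta(H)$, so the open neighborhood of any minimum-degree vertex is a vertex cut of the third claimed size.

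For the lower bound I would argue by contradiction: suppose $C\subseteq V(G\times H)$ is a vertex cut with $|C| < M := \min\{\kappa(G)|V(H)|,\ \kappa(H)|V(G)|,\ \delta(G)+\delta(H)\}$, and set $\Gamma = (G\times H)-C$. For each $u\in V(G)$ put $C_u = \{w : (u,w)\in C\}$, and for each $w\in V(H)$ put $D_w = \{u : (u,w)\in C\}$; double-counting gives $\sum_u|C_u| = \sum_w|D_w| = |C|$. Call a fiber $H_u$ \emph{clean} when $|C_u| < \kappa(H)$, so that $H_u-C_u$ is a connected copy of $H$ minus fewer than $\kappa(H)$ vertices, and define clean $G$-fibers analogously. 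The bounds $|C| < \kappa(H)|V(G)|$ and $|C| < \kappa(G)|V(H)|$ force the sets of bad fibers $B := \{u : |C_u|\geq\kappa(H)\}$ and $B' := \{w : |D_w|\geq\kappa(G)\}$ to satisfy $|B| < |V(G)|$ and $|B'| < |V(H)|$, so clean fibers exist in both directions. I would then fix one clean $H_{u^\ast}$ and let $K$ be the component of $\Gamma$ containing it, aiming to show $K = V(\Gamma)$.

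Two ingredients combine for this. First, a \emph{propagation step}: when $uu'\in E(G)$ and both $H_u$ and $H_{u'}$ are clean, the perfect matching between them in $G\times H$ leaves enough surviving edges (controlled by $|C_u|+|C_{u'}|$) to merge the two residues into a single component of $\Gamma$; iterating, $K$ contains every clean fiber whose index is connected to $u^\ast$ in $G-B$, and a symmetric statement holds for $G$-fibers. Second, an \emph{absorption step} for the remaining vertices: for any surviving $x = (u,w)\in V(\Gamma)$ lying in a bad fiber, $x$ has ${\rm deg}_G(u)+{\rm deg}_H(w)\geq\delta(G)+\delta(H) > |C|$ neighbors in $G\times H$, so at least one neighbor lies in $\Gamma$, and an inductive chase along such escape edges must eventually enter the clean region already contained in $K$. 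The main obstacle is making the propagation genuinely transitive in the presence of scattered deletions and reconciling it with the absorption step; this is where all three terms of $M$ are genuinely needed, the two global bounds controlling the width of the bad region in each fiber direction while the degree bound supplies the local escape route inside it.
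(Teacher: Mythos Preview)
The paper does not contain a proof of this theorem: it is quoted as \v{S}pacapan's result from \cite{Sp} and used as a black box (to derive Corollary~\ref{ConCarP} and then Theorem~\ref{MaxConTh}). So there is no ``paper's own proof'' to compare your attempt against.

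On the substance of your sketch: the upper bound is essentially fine (with the usual caveat that when $G$ is complete the set $S\times V(H)$ is not a separator, but then $\kappa(G)\,|V(H)|\ge\delta(G)+\delta(H)$ anyway, so the minimum is still an upper bound). The lower bound, however, has a real gap. Your propagation step only merges clean $H$-fibers whose indices lie in the \emph{same component} of $G-B$; nothing you wrote forces $G-B$ to be connected, and the symmetric statement for $G$-fibers has the same defect. More seriously, the absorption step is not an argument: from $|N(x)|>|C|$ you get one surviving neighbor of $x$, but that neighbor may again sit in bad fibers in both directions, and ``an inductive chase \ldots must eventually enter the clean region'' is exactly the statement you are trying to prove. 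Without a potential function or a structural lemma bounding how the bad regions in the two fiber directions can overlap, this chase can in principle cycle indefinitely. \v{S}pacapan's actual proof handles precisely this interaction and is noticeably more delicate than your outline suggests; if you want to reconstruct it, the missing idea is a careful two-directional counting that shows any putative small cut would have to isolate a single vertex, not merely that clean fibers exist.
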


Suppose that $G$ is nontrivial and connected. 
From Theorem \ref{SpT},
$\kappa(G^{[2]}) = \min\{\kappa(G) \cdot |V(G)|, 2\delta(G)\}$. 
If $G \cong K_2$ or $\kappa(G) \geq 2$, 
then the right-hand-side in this equation equals $2\delta(G)$.
If $G \not\cong K_2$ and $\kappa(G) = 1$, then 
by applying the following famous theorem by Dirac, 
we have $\delta(G) < \frac{|V(G)|}{2}$ which implies that 
the right-hand-side also equals $2\delta(G)$.
Note that if $\kappa(G) = 1$, then $G$ is not Hamiltonian. 
Thus, it holds that $\kappa(G^{[2]}) = 2\delta(G)$ for any nontrivial connected graph $G$.

\begin{theorem} {\rm (Dirac \cite{D})}
Every graph $G$ of order at least 3 with $\delta(G) \geq \frac{|V(G)|}{2}$ is Hamiltonian.
\end{theorem}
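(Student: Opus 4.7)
The plan is to prove Dirac's theorem by the standard longest-path rotation argument. Write $n = |V(G)|$ and assume $\delta(G) \geq n/2$ with $n \geq 3$. First I would check connectivity: if $u, v \in V(G)$ were in distinct components, then $N_G(u)$ and $N_G(v)$ would be disjoint, giving $n \geq |N_G(u)| + |N_G(v)| + 2 \geq n + 2$, a contradiction. So $G$ is connected.

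Next, let $P = v_0 v_1 \cdots v_k$ be a longest path in $G$. By maximality, every neighbor of $v_0$ and every neighbor of $v_k$ lies on $P$. The key step is to produce a cycle of length $k+1$ on the vertex set of $P$. Consider the index sets
\[
A = \{ i : v_0 v_{i+1} \in E(G),\ 0 \leq i \leq k-1 \}, \qquad
B = \{ i : v_i v_k \in E(G),\ 0 \leq i \leq k-1 \}.
\]
Then $|A| = \deg_G(v_0) \geq n/2$ and $|B| = \deg_G(v_k) \geq n/2$, and both sets lie in $\{0, 1, \ldots, k-1\}$, a set of size $k \leq n-1 < n$. By pigeonhole, $A \cap B \neq \emptyset$, so there is an index $i$ with $v_0 v_{i+1}, v_i v_k \in E(G)$. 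The closed walk
\[
C = v_0 v_1 \cdots v_i v_k v_{k-1} \cdots v_{i+1} v_0
\]
is then a cycle on all $k+1$ vertices of $P$.

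Finally, I would argue $C$ must be Hamiltonian. If some vertex $w \in V(G) \setminus V(P)$ existed, then since $G$ is connected, $w$ would be joined by a path in $G$ to some vertex of $C$; in particular some $w' \notin V(C)$ would be adjacent to a vertex of $C$. Cutting $C$ at that vertex and attaching $w'$ produces a path of length $k+1$, contradicting the maximality of $P$. Hence $V(C) = V(G)$ and $G$ is Hamiltonian.

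The main obstacle is really just the pigeonhole setup in the middle paragraph, which requires a careful choice of the two candidate index sets within $\{0, 1, \ldots, k-1\}$ (rather than, say, $\{1, \ldots, k\}$) so that the degree bounds $\delta(G) \geq n/2$ force an intersection via $|A| + |B| > k$. The rest is bookkeeping: the connectivity step is immediate from the degree bound, and the final extension step is a direct consequence of having chosen $P$ to be a longest path.
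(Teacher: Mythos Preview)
Your proof is correct and follows the standard longest-path rotation argument for Dirac's theorem. Note, however, that the paper does not prove this statement at all: it is quoted as a classical result from \cite{D} and is used only as a tool (to deduce $\delta(G) < |V(G)|/2$ when $\kappa(G)=1$ in the discussion preceding Corollary~\ref{ConCarP}). So there is no ``paper's own proof'' to compare against; your argument is simply the usual textbook proof supplied where the paper relies on a citation.
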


Again, from Theorem \ref{SpT} under the condition $n \geq 3$
with the induction hypothesis $\kappa(G^{[n-1]}) = (n-1)\delta(G)$, we have 
$$\begin{array}{lcl}
\kappa(G^{[n]}) = \kappa(G \times G^{[n-1]}) 
& = & \min\{ \kappa(G) \cdot |V(G^{[n-1]})|,\ \kappa(G^{[n-1]}) \cdot |V(G)|,\ 
\delta(G)+\delta(G^{[n-1]}) \} \\ 
& = & \min\{ \kappa(G) \cdot |V(G)|^{n-1},\ (n-1)\delta(G) \cdot |V(G)|,\ 
\delta(G)+(n-1)\delta(G)\} \\
& = & \min\{ \kappa(G) \cdot |V(G)|^{n-1},\ n \cdot \delta(G)\}. \\
\end{array}$$
Since 
$$\kappa(G) \cdot |V(G)|^{n-1} \geq (\delta(G)+1)^{n-1} \geq \delta(G)^{n-1}+(n-1)\delta(G)^{n-2}+1 > n \cdot \delta(G),$$
the following corollary holds.

\begin{corollary} \label{ConCarP}
For any nontrivial connected graph $G$ and any 
$n \geq 2$, 
$G^{[n]}$ is maximally connected, i.e., 
$$\kappa\left(G^{[n]}\right) = n \cdot \delta(G).$$
\end{corollary}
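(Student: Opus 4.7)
The plan is to prove the corollary by induction on $n$, leveraging \v{S}pacapan's theorem at every step exactly as the surrounding discussion has already indicated. The base case $n=2$ has essentially been handled in the preceding paragraph, so the real content is organizing the inductive step cleanly and justifying the dominating inequality.

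For the base case $n=2$, I would apply Theorem \ref{SpT} to obtain $\kappa(G^{[2]}) = \min\{\kappa(G)\cdot|V(G)|,\ 2\delta(G)\}$. If $G\cong K_2$ or $\kappa(G)\geq 2$, then $\kappa(G)\cdot|V(G)|\geq 2|V(G)|\geq 2(\delta(G)+1)>2\delta(G)$, so the minimum is $2\delta(G)$. The only remaining case is $\kappa(G)=1$ with $G\not\cong K_2$; here $G$ is not Hamiltonian (Hamiltonicity forces $2$-connectedness), so by Dirac's theorem $\delta(G)<|V(G)|/2$, giving $\kappa(G)\cdot|V(G)|=|V(G)|>2\delta(G)$ once more.

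For the inductive step with $n\geq 3$, I would assume $\kappa(G^{[n-1]})=(n-1)\delta(G)$ and apply Theorem \ref{SpT} to $G\times G^{[n-1]}$, which yields
$$\kappa(G^{[n]}) = \min\bigl\{\kappa(G)\cdot|V(G)|^{n-1},\ (n-1)\delta(G)\cdot|V(G)|,\ n\cdot\delta(G)\bigr\}.$$
The second term clearly dominates $n\cdot\delta(G)$ since $|V(G)|\geq\delta(G)+1\geq 2$ and $n\geq 3$. The main obstacle, and the step that needs genuine (if brief) justification, is showing $\kappa(G)\cdot|V(G)|^{n-1}>n\cdot\delta(G)$. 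Using $\kappa(G)\geq 1$ (as $G$ is nontrivial and connected) and $|V(G)|\geq\delta(G)+1$, I would bound $\kappa(G)\cdot|V(G)|^{n-1}\geq(\delta(G)+1)^{n-1}$ and then expand by the binomial theorem to obtain $(\delta(G)+1)^{n-1}\geq\delta(G)^{n-1}+(n-1)\delta(G)^{n-2}+1>n\cdot\delta(G)$, with the final inequality valid whenever $\delta(G)\geq 1$ (which holds since $G$ is connected and nontrivial).

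Combining the base case with the inductive conclusion yields $\kappa(G^{[n]})=n\cdot\delta(G)=\delta(G^{[n]})$ for all $n\geq 2$, establishing maximal connectedness. Essentially all of this appears already in the paragraphs preceding the corollary; my proof would simply assemble those fragments into a clean induction, with the dominance inequality above being the only genuinely new line.
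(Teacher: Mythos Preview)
Your proposal is correct and follows exactly the same route as the paper: induction on $n$ using \v{S}pacapan's theorem, with the base case handled via Dirac's theorem and the inductive step via the binomial-type estimate $(\delta(G)+1)^{n-1}\geq\delta(G)^{n-1}+(n-1)\delta(G)^{n-2}+1>n\cdot\delta(G)$. As you yourself note, every ingredient already appears in the paragraphs preceding the corollary, and your write-up simply assembles them into a self-contained induction.
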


Based on Corollary \ref{ConCarP},
we show the following.

\begin{theorem} \label{MaxConTh}
For any nontrivial connected graphs $G$ and $H$,
the exponential graph $G^H$ is maximally connected, i.e.,
$$\kappa\left(G^H\right) = \delta(G)+\delta(H).$$
\end{theorem}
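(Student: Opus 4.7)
The upper bound $\kappa(G^H) \leq \delta(G^H) = \delta(G) + \delta(H)$ is immediate from the standard inequality $\kappa \leq \delta$ combined with Proposition~\ref{Prop1}(3); the content is to prove $\kappa(G^H) \geq \delta(G) + \delta(H)$, and I proceed by contradiction. Write $q = |V(H)|$, $\Gamma = G^{[q]}$, and $k = \delta(G) + \delta(H)$. By Corollary~\ref{ConCarP}, $\kappa(\Gamma) = q\delta(G)$; since $\delta(H) \leq q - 1$ and $\delta(G) \geq 1$, one has $k \leq q\delta(G) = \kappa(\Gamma)$.

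Suppose $S \subseteq V(G^H)$ with $|S| \leq k-1$ disconnects $G^H$. Put $S_u = S \cap V(H_u)$ for each $u \in V(\Gamma)$ and let $T = \{u \in V(\Gamma) : V(H_u) \subseteq S\}$ be the set of \emph{fully destroyed} tuples. Since the $V(H_u)$ partition $V(G^H)$ and each has $q$ elements, $q|T| \leq |S| \leq k - 1 < \kappa(\Gamma)$, so $\Gamma - T$ is connected. Note also that for any $x \notin S$, $\rho(x) \notin T$.

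Take two vertices $x, y$ lying in different components of $G^H - S$. To derive the contradiction I construct a walk from $x$ to $y$ in $G^H - S$. Choose a path $P = u^{(0)}u^{(1)} \cdots u^{(\ell)}$ in $\Gamma - T$ from $\rho(x)$ to $\rho(y)$, whose $i$-th edge has dimension $d_i$. The plan is to lift $P$ by alternating: inside each $H_{u^{(i)}}$ walk from the incoming \emph{channel} $(u^{(i)}; w_{d_i})$ to the outgoing channel $(u^{(i)}; w_{d_{i+1}})$ using $H$-edges, and between consecutive blocks cross the $G$-edge $(u^{(i)}; w_{d_{i+1}})(u^{(i+1)}; w_{d_{i+1}})$. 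The lift succeeds provided each required $G$-edge endpoint lies outside $S$ and each in-block $H$-subpath exists in $H_{u^{(i)}} - S_{u^{(i)}}$.

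A single lift may fail because a channel vertex sits in $S$ or $H_{u^{(i)}} - S_{u^{(i)}}$ is disconnected. To surmount this I would apply Menger's theorem in $\Gamma$ to produce $\kappa(\Gamma) = q\delta(G) \geq k$ internally disjoint paths from $\rho(x)$ to $\rho(y)$, attempt a lift for each, and argue by pigeonhole that since $|S| \leq k-1 < k$, some lift must succeed. The main obstacle is the charging step: a vertex $s = (u; w_j) \in S$ can obstruct several candidate lifts simultaneously (it blocks every $G$-edge meeting $H_u$ at channel $w_j$ and can also help fragment $H_u - S_u$). Establishing a clean per-path charge --- showing that each obstructed $\Gamma$-path consumes at least one distinct element of $S$ --- is the crux of the argument, and I expect it to require a careful case split according to which of the two failure modes occurs and where along $P$ it is located.
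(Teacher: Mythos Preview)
Your proposal is incomplete: you explicitly leave the charging step --- showing that each obstructed $\Gamma$-path consumes a distinct element of $S$ --- unproved, and this is not a detail but the entire content of the lower bound. The difficulty is real. All $k$ Menger paths share the endpoint $\rho(x)$, so a single vertex of $S_{\rho(x)}$ (or a small set fragmenting $H_{\rho(x)} - S_{\rho(x)}$) can obstruct the initial channel of many lifts simultaneously; there is no evident one-to-one charge at the endpoints. You also do not treat the case $\rho(x) = \rho(y)$, where the Menger setup does not even apply.

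The paper sidesteps these issues by a different decomposition. Instead of your set $T$ of \emph{fully} destroyed tuples, it uses $S' = \{u \in V(\Gamma) : S_u \neq \emptyset\}$, the tuples whose fibre is merely \emph{touched} by $S$. Since $|S'| \leq |S| < q\,\delta(G) = \kappa(\Gamma)$, the graph $\Gamma - S'$ is connected; and because no vertex of $S$ lies over $\Gamma - S'$, the subgraph $G^H - \bigcup_{u \in S'} V(H_u)$ is automatically connected (every fibre there is an intact copy of $H$). The remaining work is purely local: show that each surviving vertex $x$ in a touched fibre can reach this connected core. Here the counting is clean. If every $G$-neighbour of $x$ lies in a touched fibre, those $\geq \delta(G)$ fibres (all distinct) each contain an element of $S$, leaving at most $\delta(H)-1$ elements of $S$ to meet the $\geq \delta(H)$ $H$-neighbours of $x$; hence some $z \in N_{G^H}(x;H)\setminus S$ exists. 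Repeating the argument at $z$ --- whose $G$-neighbour fibres are disjoint from those of $x$, since they differ from $\rho(x)$ in coordinate $\sigma(z)\neq\sigma(x)$ --- either finds an escape or consumes another $\delta(G)$ elements of $S$, freeing yet another $H$-neighbour of $x$; iteration terminates because $|S|$ is finite. This local-escape budget argument replaces your global pigeonhole and avoids the endpoint-charging obstacle entirely.
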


\begin{proof}
Suppose that $G$ and $H$ are nontrivial and connected and $|V(H)| = q$. 
Let $S \subset V(G^H)$ such that $|S| = \delta(G^H)-1 = \delta(G)+\delta(H)-1$.
Since $\kappa(G^H) \leq \delta(G^H)$,
it is sufficient to show that $G^H-S$ is connected.

Define $S'$ as follows:
$$S' = \{ u \in V(G^{[q]})\ |\ S \cap V(H_u) \neq \emptyset \}.$$
Clearly, $|S'| \leq |S|$. 
By Corollary \ref{ConCarP}, $\kappa(G^{[q]}) = q \cdot \delta(G)$.
Since $$q \cdot \delta(G) \geq \delta(G)+(q-1)\delta(G) \geq \delta(G)+\delta(H),$$
$G^{[q]}-S'$ is connected.
Thus $G^H-\cup_{u \in S'}V(H_u)$ is connected. 
Let $x \in \cup_{u \in S'}V(H_u) \setminus S$.
If $$N_{G^H}(x; G) \setminus \cup_{u \in S'}V(H_u) \neq \emptyset,$$
then $x$ is adjacent to a vertex in $G^H-\cup_{u \in S'}V(H_u)$.
Suppose that $N_{G^H}(x; G) \setminus \cup_{u \in S'}V(H_u) = \emptyset$,
i.e., $$N_{G^H}(x; G) \subseteq \cup_{u \in S'}V(H_u).$$
For any two distinct vertices $y_1,y_2 \in N_{G^H}(x; G)$, $\rho(y_1) \neq \rho(y_2)$, i.e.,
$H_{\rho(y_1)} \neq H_{\rho(y_2)}$.
Thus, 
$$|S \cap (\cup_{y \in N_{G^H}(x; G)}V(H_{\rho(y)}))| \geq |N_{G^H}(x;G)| \geq \delta(G).$$
Since $|S| = \delta(G)+\delta(H)-1$ and $|N_{G^H}(x;H)| \geq \delta(H)$, 
$$N_{G^H}(x; H) \setminus S \neq \emptyset.$$
Let $z \in N_{G^H}(x; H) \setminus S$.
If $$N_{G^H}(z; G) \setminus \cup_{u \in S'}V(H_u) \neq \emptyset,$$
then $z$ is adjacent to a vertex in $G^H-\cup_{u \in S'}V(H_u)$
and $x$ is connected to the vertex through $z$.
Suppose that $N_{G^H}(z; G) \subseteq \cup_{u \in S'}V(H_u)$.
Since $|S \cap (\cup_{y \in N_{G^H}(z; G)}V(H_{\rho(y)})| \geq \delta(G)$
and $N_{G^H}(z;G) \cap N_{G^H}(x;G) = \emptyset$, 
it holds that $\delta(H)-1 \geq \delta(G)$ and 
$$|N_{G^H}(x; H) \cap S| \leq \delta(H)-1-\delta(G),$$
i.e., 
$$|N_{G^H}(x; H) \setminus S| \geq \delta(G)+1 \geq 2.$$
Thus, we can select a vertex $z' \in N_{G^H}(x;H) \setminus (S \cup \{z\})$.
If $N_{G^H}(z'; G) \setminus \cup_{u \in S'}V(H_u) \neq \emptyset$,
then $x$ is connected to a vertex in $G^H-\cup_{u \in S'}V(H_u)$.
Otherwise, we similarly have $$|N_{G^H}(x;H) \setminus S| \geq 2\delta(G)+1 \geq 3$$
and there exists a vertex $z'' \in N_{G^H}(x;H) \setminus (S \cup \{z,z'\})$.
Iteratively applying a similar discussion, we finally find a vertex 
$z^\ast \in N_{G^H}(x;H) \setminus S$ such that $x$ is connected to 
a vertex in $G^H-\cup_{u \in S'}V(H_u)$ through $z^\ast$. 
Therefore, $G^H-S$ is connected.
\end{proof}

\bigskip

From Theorem \ref{MaxConTh}, we have $\lambda(G^H) = \delta(G^H)$ for nontrivial connected graphs
$G$ and $H$. 
We next characterize super edge-connected exponential graphs.

\begin{theorem} \label{SupEdTh}
Let $G$ and $H$ be nontrivial connected graphs.  
The exponential graph $G^H$ is super edge-connected
if and only if 
$\delta(G) \geq 2$ or $H$ is not a complete graph,
and $\delta(H) \geq 2$ or $G$ is not a complete graph.
\end{theorem}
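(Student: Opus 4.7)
The proof splits into necessity (constructive) and sufficiency (the substantive direction). \textbf{For necessity}, I would produce in each forbidden case a set $S \subseteq V(G^H)$ with $|S| \geq 2$ and $|\partial S| = \delta(G) + \delta(H)$, giving a minimum edge-cut that does not isolate a single vertex. If $\delta(G) = 1$ and $H \cong K_q$, fix $u_0 \in V(G)$ with $\deg_G(u_0) = 1$ and take $S = V(H_{\mathbf{u}})$ for $\mathbf{u} = (u_0, \ldots, u_0)$: since $H_{\mathbf{u}}$ is a copy of $K_q$, no $H$-edge leaves $S$, while each of its $q$ vertices has exactly one outgoing $G$-edge, yielding $|\partial S| = q = \delta(G) + \delta(H)$. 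Symmetrically, if $\delta(H) = 1$ and $G \cong K_p$, fix $w_{j_0}$ with $\deg_H(w_{j_0}) = 1$ and $u_i \in V(G)$ for $i \neq j_0$, and take $S = \{(u_1, \ldots, v, \ldots, u_q; w_{j_0}) : v \in V(G)\}$ (with $v$ in position $j_0$): all $G$-edges from $S$ (necessarily of dimension $j_0$) stay inside $S$ because $G$ is complete, while each of the $p$ vertices has a single outgoing $H$-edge, yielding $|\partial S| = p = \delta(G) + \delta(H)$.

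\textbf{For sufficiency}, assume both hypotheses. Let $F$ be a minimum edge-cut, so $|F| = \delta(G) + \delta(H)$ by Theorem \ref{MaxConTh}, and write $G^H - F = A \sqcup B$. The plan is to assume for contradiction that $|A|, |B| \geq 2$ and derive $|F| > \delta(G) + \delta(H)$. I would classify the fibers as $U_A = \{u : V(H_u) \subseteq A\}$, $U_B = \{u : V(H_u) \subseteq B\}$, and $U_M$ (split fibers), then split into cases. If $U_M = \emptyset$, then $F$ consists only of $G$-edges, which under the projection $\pi : G^H \to G^{[q]}$, $\pi(x) = \rho(x)$, correspond bijectively to an edge-cut of $G^{[q]}$ separating $U_A$ from $U_B$. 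Since $\lambda(G^{[q]}) = q\delta(G)$ by Corollary \ref{ConCarP}, $|F| \geq q\delta(G)$; the first hypothesis ($\delta(G) \geq 2$ or $H \not\cong K_q$) implies $(q-1)\delta(G) > \delta(H)$ in either subcase, so $q\delta(G) > \delta(G) + \delta(H)$, contradiction. If $U_M \neq \emptyset$, each split fiber $H_u$ contributes at least $\lambda(H) \geq 1$ $H$-edges to $F$; combined with the degree-sum bound $|F| = \sum_{x \in A} \deg_{G^H}(x) - 2|E(\langle A \rangle_{G^H})|$, equality forces $|E(\langle A \rangle_{G^H})| \geq (|A|-1)(\delta(G)+\delta(H))/2$. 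A structural analysis of such dense subsets, tracking $G$-edges between fibers and $H$-edges within split fibers, pins down the extremal configuration either to a union of full fibers (already handled) or to the ``$G$-fiber'' pattern from the second necessity construction; the second hypothesis ($\delta(H) \geq 2$ or $G \not\cong K_p$) excludes the latter, completing the contradiction.

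\textbf{Main obstacle.} The substantive difficulty lies in the mixed case $U_M \neq \emptyset$: $F$ contains both $G$-edges across fibers and $H$-edges inside split fibers, and neither contribution alone yields a sharp lower bound. The argument must juggle the $\geq \lambda(H)$ contribution of each split fiber, the $G$-edges between partial fibers, and the global structure of the projected cut in $G^{[q]}$. Enumerating the few extremal configurations that could realize $|F| = \delta(G) + \delta(H)$ with $|A|, |B| \geq 2$ and then ruling out each of them under the two hypotheses is where the bulk of the work lies.
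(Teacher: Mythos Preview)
Your necessity constructions are correct and essentially identical to the paper's. Your sufficiency argument in the case $U_M = \emptyset$ is also correct: the $G$-edges in $F$ do correspond bijectively to an edge-cut of $G^{[q]}$, and the inequality $q\delta(G) > \delta(G)+\delta(H)$ follows from the first hypothesis exactly as you say.

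The gap is the case $U_M \neq \emptyset$. What you have written there is a plan, not an argument: noting that each split fiber contributes at least $\lambda(H)$ $H$-edges, together with the degree-sum identity, does not by itself pin down the extremal configurations. The phrase ``structural analysis of such dense subsets'' is precisely the content that needs to be supplied, and there are several subtle sub-cases (a single split fiber with a few $G$-edges crossing out, multiple split fibers, fibers split unevenly, etc.) that your counting does not distinguish. You yourself flag this as the main obstacle, and indeed it is; as stated, the mixed case is not proved.

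The paper sidesteps this obstacle entirely by a different device. Rather than classifying fibers relative to the cut, it uses \emph{vertex}-connectivity of $G^{[q]}$: it assigns to each edge of $F$ a single fiber (for a $G$-edge, the endpoint whose $\sigma$-coordinate is smaller; for an $H$-edge, the common fiber), obtaining a set $S \subseteq V(G^{[q]})$ with $|S| \le |F| = \delta(G)+\delta(H) < q\delta(G) = \kappa(G^{[q]})$. Thus $G^{[q]}-S$ is connected, so $G^H - \bigcup_{u \in S} V(H_u)$ is connected and lies inside $G^H - F$. It then remains only to show that each individual vertex $x$ in a removed fiber reaches this large component in $G^H - F$, which is done by a short local case analysis (walk along a $G$-edge or an $H$-path of length at most two out of $N_{G^{[q]}}(\rho(x))\cup\{\rho(x)\}$), with the two hypotheses used exactly to rule out the two degenerate escape-blocking configurations. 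This ``delete touched fibers, then reconnect'' argument replaces your global counting in the mixed case by a local path-finding step, which is where the two hypotheses enter cleanly.
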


\begin{proof}
Let $G$ and $H$ be connected graphs with $V(G) = \{1,2,\ldots,p\}$
and $V(H) = \{w_1,w_2,\ldots,w_q\}$ where $p \geq 2$ and $q \geq 2$.
Suppose that $\delta(G) \geq 2$ or $H \not\cong K_q$,
and $\delta(H) \geq 2$ or $G \not\cong K_p$. 
Let $F \subset E(G^H)$ with $|F| = \delta(G^H)$ such that 
$F$ does not isolate any vertex in $G^H$. 
We then show that $G^H-F$ is connected.

Let $F_G = \{ xy \in F\ |\ \rho(x) \neq \rho(y) \}$ and 
$F_H = \{xy \in F\ |\ \rho(x) = \rho(y) \}$.
Define $S_G$ and $S_H$ as follows:  
$$\left\{ \begin{array}{lll}
S_G & = & \left\{ \left. \rho(x) \in V(G^{[q]})\ \right|\ xy \in F_G, 
\rho_{\sigma(x)}(x) < \rho_{\sigma(y)}(y) \right\}, \\
S_H & = & \left\{ \left. \rho(x) \in V(G^{[q]})\ \right|\ xy \in F_H \right\}.
\end{array} \right. $$
Note that $|S_G \cup S_H| \leq |F|$ and $G^H - \cup_{u \in S_G \cup S_H}V(H_u) \subset G^H-F$.
From the condition that $\delta(G) \geq 2$ or $H \not\cong K_q$, 
$$q \cdot \delta(G) \geq \delta(G)+(q-1)\delta(G) > \delta(G)+\delta(H) = \delta(G^H).$$
Since $\kappa(G^{[q]}) = q \cdot \delta(G)$ by Corollary \ref{ConCarP}, 
$G^{[q]}-(S_G \cup S_H)$ is connected.
Therefore, $G^H - \cup_{u \in S_G \cup S_H}V(H_u)$ is connected. 

Let $x \in V(G^H)$ such that $\rho(x) \in S_G \cup S_H$. 
Now suppose that for any edge $xy \in E(G^H) \setminus F$ incident with $x$, 
\begin{enumerate}
\item if $xy$ is a $G$-edge, then $\rho(y) \in S_G \cup S_H$, 
\item if $xy$ is an $H$-edge, then for any $G$-edge $yz$ incident with $y$, 
$yz \in F$ or $\rho(z) \in S_G \cup S_H$. 
\end{enumerate}
Note that the vertex $x$ is connected to a vertex of
$G^H - \cup_{u \in S_G \cup S_H}V(H_u)$ in $G^H-F$, unless these conditions hold. 
From the conditions, it follows that ${\rm deg}_{G^H}(x) = \delta(G^H)$
and $S_G \cup S_H \subseteq N_{G^{[q]}}(\rho(x)) \cup \{\rho(x)\}$.
Moreover, it holds that for any $G$-edge $xy \not\in F$ incident with $x$,
the number of edges in $F$ incident with a vertex in $V(H_{\rho(y)}) \setminus \{y\}$ 
is at most one, since for any vertex $y' \in V(H_{\rho(y)}) \setminus \{y\}$,
there is no $G$-edge $y'z$ incident with $y'$ such that
$\rho(z) \in N_{G^{[q]}}(\rho(x)) \cup \{ \rho(x) \}$.

\bigskip

\noindent Case 1: $\delta(G) \geq 2$.

The existence of an $H$-edge $xy \not\in F$ implies that 
$|F| > \delta(G^H)$.
Thus, every $H$-edge incident with $x$ is in $F$ 
and there exists a $G$-edge incident with $x$ which is not in $F$.
Except for the case that $\delta(H) = 1$ and for any $G$-edge $xy \not\in F$, 
the $H$-edge $yy' \in E(H_{\rho(y)})$ is in $F$, 
there exists a path from $x$ to a vertex of $G^H - \cup_{u \in S_G \cup S_H}V(H_u)$
in $G^H-F$. 
Consider the exceptional case. 
Since $G \not\cong K_p$ and ${\rm deg}_{G}(\rho_{\sigma(x)}(x)) = \delta(G)$,
there exists a $G$-path $P_1 = (x,y,z_2)$ or $P_2 = (x,y,z_1,z_2)$ 
in $G^H-F$ such that $\rho(z_2) \not\in N_{G^{[q]}}(\rho(x)) \cup \{ \rho(x) \}$.
Thus, $x$ is connected to a vertex of $G^H - \cup_{u \in S_G \cup S_H}V(H_u)$ in $G^H-F$. 

\bigskip

\noindent Case 2: $\delta(G) = 1$.

Suppose that there exists an $H$-edge $xy \not\in F$.
Since $H \not\cong K_q$ and ${\rm deg}_{H}(w_{\sigma(x)}) = \delta(H)$, 
there exists a $H$-path $Q_1 = (x,y,y_2)$ or $Q_2 = (x,y,y_1,y_2)$ in $G^H-F$
such that $xy_2 \not\in E(G^H)$ and 
for any $G$-edge $y_2y'_2$ incident with $y_2$, $\rho(y'_2) \not\in S_G \cup S_H$.
Thus, $x$ is connected to a vertex of $G^H - \cup_{u \in S_G \cup S_H}V(H_u)$ in $G^H-F$. 
Suppose that every $H$-edge incident with $x$ is in $F$.
Since $H \not\cong K_2$ when $\delta(H) = 1$, 
similarly to Case 1, it can be shown that $x$ is connected to a vertex 
of $G^H - \cup_{u \in S_G \cup S_H}V(H_u)$ in $G^H-F$.

\bigskip

From the above discussion, 
any vertex $x$ in $H_{\rho(x)}$ where $\rho(x) \in S_G \cup S_H$
is connected to a vertex of 
$G^H - \cup_{u \in S_G \cup S_H}V(H_u)$ in $G-F$.
Therefore, $G-F$ is connected.
Hence, $G$ is super edge-connected.

In what follows, we show that $G^H$ is not super edge-connected
when $\delta(G) = 1$ and $H \cong K_q$,
or $\delta(H) = 1$ and $G \cong K_p$. 
First, consider the case that 
$\delta(G) = 1$ and $H \cong K_q$. 
Let $a \in V(G)$ such that ${\rm deg}_G(a) = 1$ and $ab \in E(G)$.
Define $F_1$ as follows:
$$ \begin{array}{ll}
F_1 = \{ & ((a,a,\ldots,a); w_1)((b,a,\ldots,a);w_1), \\
&  ((a,a,\ldots,a); w_2)((a,b,\ldots,a);w_2), \\
& \hspace*{23mm} \vdots \\
& ((a,a,\ldots,a); w_q)((a,a,\ldots,b);w_q) \ \}.
\end{array}$$
Then, $G^H-F_1$ is disconnected; in fact, 
$F_1$ separates $G^H$ into $H_{(a,a,\ldots,a)}$
and $G^{H}-H_{(a,a,\ldots,a)}$. 
Since $|F_1| = q = \delta(G)+\delta(H) = \delta(G^H)$, $F_1$ is a minimum edge-cut which 
does not isolates a vertex.

Next, consider the case that $G \cong K_p$ and $\delta(H) = 1$. 
Without loss of generality, we may assume that ${\rm deg}_H(w_1) = 1$ and $w_1w_2 \in E(H)$.
Let $(u_2,\ldots,u_p) \in V(G^{[p-1]})$. 
Define $F_2$ as follows:
$$ \begin{array}{ll}
F_2  = \{ & ((1,u_2,\ldots,u_p); w_1)((1,u_2,\ldots,u_p);w_2),\\
&  ((2,u_2,\ldots,u_p); w_1)((2,u_2,\ldots,u_p);w_2), \\
& \hspace*{26mm} \vdots \\
& ((p,u_2,\ldots,u_p); w_1)((p,u_2,\ldots,u_p);w_2) \ \}.
\end{array}$$
Then, $G^H$ is separated by $F_2$ into 
$G' = \langle \{ (i,u_2,\ldots,u_p); w_1)\ |\ 1 \leq i \leq p \} \rangle_{G^H} \cong G$ and 
$G^{H}-G'$, i.e., $G^H-F_2$ is disconnected. 
Since $|F_2| = p = \delta(G^H)$, $F_2$ is 
a minimum edge-cut which does not isolate a vertex. 
\end{proof}

\bigskip

From Theorem \ref{SupEdTh},
we have the following corollary.

\begin{corollary}
Every connected exponential graph $G^H$ with $\delta(G) \geq 2$ and $\delta(H) \geq 2$
is super edge-connected.
\end{corollary}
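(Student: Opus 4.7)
The plan is to deduce this corollary immediately from Theorem \ref{SupEdTh}, since the hypotheses of the corollary trivially imply both clauses of the ``if and only if'' characterization given there. Specifically, Theorem \ref{SupEdTh} states that, for nontrivial connected $G$ and $H$, the graph $G^H$ is super edge-connected precisely when ($\delta(G) \geq 2$ or $H \not\cong K_{|V(H)|}$) and ($\delta(H) \geq 2$ or $G \not\cong K_{|V(G)|}$). Under the assumption $\delta(G) \geq 2$, the first disjunction holds via its left disjunct, regardless of whether $H$ is complete; symmetrically, $\delta(H) \geq 2$ guarantees the second disjunction.

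Thus there is no real obstacle: I would simply observe that the two hypotheses $\delta(G) \geq 2$ and $\delta(H) \geq 2$ satisfy the sufficient side of the biconditional in Theorem \ref{SupEdTh}, and conclude that $G^H$ is super edge-connected. The only minor thing to note is that $G$ and $H$ must be nontrivial for Theorem \ref{SupEdTh} to apply, but this is automatic from $\delta(G), \delta(H) \geq 2$, which forces $|V(G)|, |V(H)| \geq 3$.
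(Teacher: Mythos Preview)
Your proposal is correct and matches the paper's approach exactly: the corollary is stated immediately after Theorem \ref{SupEdTh} with no separate proof, as it follows trivially from the characterization there. Your added remark that $\delta(G),\delta(H)\geq 2$ forces $G$ and $H$ to be nontrivial is a nice bit of hygiene that the paper leaves implicit.
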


\section{Hamiltonicity}

In this section, we present sufficient conditions for exponential graphs to be Hamiltonian
and to have two edge-disjoint Hamiltonian cycles and also two completely independent
spanning trees. 

We first show the following theorem.

\begin{theorem} \label{Ham-Th1}
If a graph $G$ is Hamiltonian, then the exponential graph $G^{K_2}$ is Hamiltonian.
\end{theorem}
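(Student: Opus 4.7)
The plan is to write down an explicit Hamiltonian cycle of $G^{K_2}$ from any given Hamiltonian cycle $C = (g_0, g_1, \ldots, g_{n-1}, g_0)$ of $G$, where $n = |V(G)| \geq 3$ and all indices are read mod $n$. First I would partition $V(G^{K_2})$ into $2n$ layers: for each $i$, the top layer $T_i = \langle \{(u_1, g_i; w_1) : u_1 \in V(G)\}\rangle_{G^{K_2}}$ and the bottom layer $B_i = \langle \{(g_i, u_2; w_2) : u_2 \in V(G)\}\rangle_{G^{K_2}}$. Directly from Definition 1, each layer is spanned by $G$-edges of a single dimension and is therefore isomorphic to $G$, while the $H$-edges of $G^{K_2}$ form a perfect matching in which $(u_1, u_2; w_1)(u_1, u_2; w_2)$ joins $T_{u_2}$ to $B_{u_1}$.

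The key observation is that in $T_i$ the vertices $(g_i, g_i; w_1)$ and $(g_{i+1}, g_i; w_1)$ correspond to consecutive vertices $g_i, g_{i+1}$ of $C$, so removing the edge $g_i g_{i+1}$ from $C$ yields a Hamiltonian path $\alpha_i$ in $T_i$ between them; analogously, removing $g_{j-1} g_j$ from $C$ produces a Hamiltonian path $\beta_j$ in $B_j$ from $(g_j, g_{j-1}; w_2)$ to $(g_j, g_j; w_2)$. I would then glue these $2n$ sub-paths together in the cyclic order $\alpha_0, \beta_1, \alpha_1, \beta_2, \ldots, \alpha_{n-1}, \beta_0$, using the $H$-edge $(g_{i+1}, g_i; w_1)(g_{i+1}, g_i; w_2)$ to bridge the end of $\alpha_i$ to the start of $\beta_{i+1}$ (so in particular $\alpha_{n-1}$ connects to $\beta_0$), and the $H$-edge $(g_j, g_j; w_2)(g_j, g_j; w_1)$ to bridge the end of $\beta_j$ to the start of $\alpha_j$; after $\beta_0$ ends at $(g_0, g_0; w_2)$ the last $H$-edge returns to $(g_0, g_0; w_1)$, the start of $\alpha_0$, closing the cycle.

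A direct check will confirm that the $2n$ sub-paths together visit each of the $2n^2$ vertices of $G^{K_2}$ exactly once and that the $2n$ inserted $H$-edges are pairwise distinct, so the resulting closed walk is indeed a Hamiltonian cycle. The main obstacle I expect is the careful choice of entry/exit points for each layer: the offsets must be arranged so that the induced ``meta-cycle'' visits each of $T_0, B_1, T_1, B_2, \ldots, T_{n-1}, B_0$ exactly once and returns to $T_0$, rather than splitting the construction into several disjoint cycles or forcing some vertex to be used twice. The offsets above (the top path in $T_i$ runs from $g_i$ to $g_{i+1}$, the bottom path in $B_j$ runs from $g_{j-1}$ to $g_j$) are designed precisely so that this meta-cycle is single and of length $2n$, and once this is arranged the Hamiltonicity of $G$ --- which guarantees the required $C$-adjacency of the chosen endpoints --- is exactly what is needed.
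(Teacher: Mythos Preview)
Your proposal is correct and follows essentially the same approach as the paper: both partition $V(G^{K_2})$ into the $2n$ ``layers'' $T_i$ and $B_j$ (each a copy of $G$), take a Hamiltonian path in each layer obtained by deleting one edge of the given Hamiltonian cycle, and splice these $2n$ paths together with $2n$ carefully chosen $H$-edges into a single Hamiltonian cycle. The only cosmetic difference is the traversal order of the layers---you visit $T_0,B_1,T_1,B_2,\ldots,T_{n-1},B_0$ using the ``diagonal'' entry/exit pair $(g_i,g_i),(g_{i+1},g_i)$ in $T_i$, whereas the paper visits $T_0,B_1,T_{p-1},B_2,T_{p-2},\ldots$ using an ``anti-diagonal'' scheme---but the underlying idea and verification are identical.
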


\begin{proof}
Let $V(K_2) = \{w_1,w_2\}$.
Let $G$ be a Hamiltonian graph with $V(G) = \{0,1,\ldots,p-1 \}$ such that 
$(0,1,\ldots,p-1,0)$ is a Hamiltonian cycle in $G$.
In what follows, we suppose that any vertex in $G$ is expressed modulo $p$.
Based on the Hamiltonian cycle, 
for each $a \in V(G)$, we define paths $P_{i}(a)$
consisting of only $G$-edges of dimension $1$ in $G^{K_2}$ as follows:  
$$P_{i}(a) = ((i,a; w_1), (i-1,a; w_1),\ldots,(i-p+1,a; w_1)),\ 0 \leq i < p.$$
For each $a \in V(G)$, we similarly define paths $Q_{i}(a)$ consisting of only $G$-edges of 
dimension $2$ in $G^{K_2}$ as follows: 
$$Q_{i}(a) = ((a,i ; w_2), (a, i+1; w_2),\ldots,(a, i+p-1; w_2)),\ 0 \leq i < p.$$
Note that for any $0 \leq i < p$, $0 \leq j < p$ and $a,b \in V(G)$,
$$ \left\{ \begin{array}{llll}
V(P_{i}(a)) \cap V(Q_{j}(b)) & = & \emptyset, & \\
V(P_{i}(a)) \cap V(P_{j}(b)) & = & \emptyset & \mbox{if $a \neq b$}, \\
V(Q_{i}(a)) \cap V(Q_{j}(b)) & = & \emptyset & \mbox{if $a \neq b$}.
\end{array} \right. $$
Combining paths $P_{i}(a)$, $Q_{j}(a)$ and $H$-edges 
$(a,a'; w_1)(a,a'; w_2)$, we construct a cycle
in $G^{K_2}$ as follows, where ``$\xrightarrow[P_{i}(a)]{}$" and ``$\xrightarrow[Q_{i}(a)]{}$"
indicate moving through the paths $P_{i}(a)$ and $Q_{i}(a)$, respectively. 
$$
\begin{array}{lcccr}
(0,0; w_1) & \xrightarrow[P_{0}(0)]{} & (1,0; w_1), 
(1,0; w_2) & \xrightarrow[Q_{0}(1)]{} & (1,p-1;w_2),\\
(1,p-1; w_1) & \xrightarrow[P_{1}(p-1)]{} & (2,p-1;w_1),
(2,p-1; w_2) & \xrightarrow[Q_{p-1}(2)]{} & (2,p-2;w_2),\\
(2,p-2;w_1) & \xrightarrow[P_{2}(p-2)]{} & (3,p-2; w_1),
(3,p-2; w_2) & \xrightarrow[Q_{p-2}(3)]{} & (3,p-3;w_2),\\
(3,p-3;w_1) & \xrightarrow[P_{3}(p-3)]{} & (4,p-3; w_1),
(4,p-3; w_2) & \xrightarrow[Q_{p-3}(4)]{} & (4,p-4;w_2),\\
  &  \vdots &   &  \vdots &   \\
(p-2,2;w_1) & \xrightarrow[P_{p-2}(2)]{} & (p-1,2; w_1),
(p-1,2; w_2) & \xrightarrow[Q_{2}(p-1)]{} & (p-1,1;w_2),\\
(p-1,1;w_1) & \xrightarrow[P_{p-1}(1)]{} & (0,1; w_1),
(0,1; w_2) & \xrightarrow[Q_{1}(0)]{} & (0,0;w_2),\\ 
(0,0;w_1). & & & & \\
\end{array}
$$
Since 
$$(\cup_{0 \leq i < p}V(P_{i}(p-i)) \cup (\cup_{0 \leq i < p}V(Q_{i}(p+1-i))  
= V\left(G^{K_2} \right),$$ 
the above construction induces a Hamiltonian cycle in $G^{K_2}$.
\end{proof}

\bigskip

\begin{figure}[t]
\centering
\epsfig{file=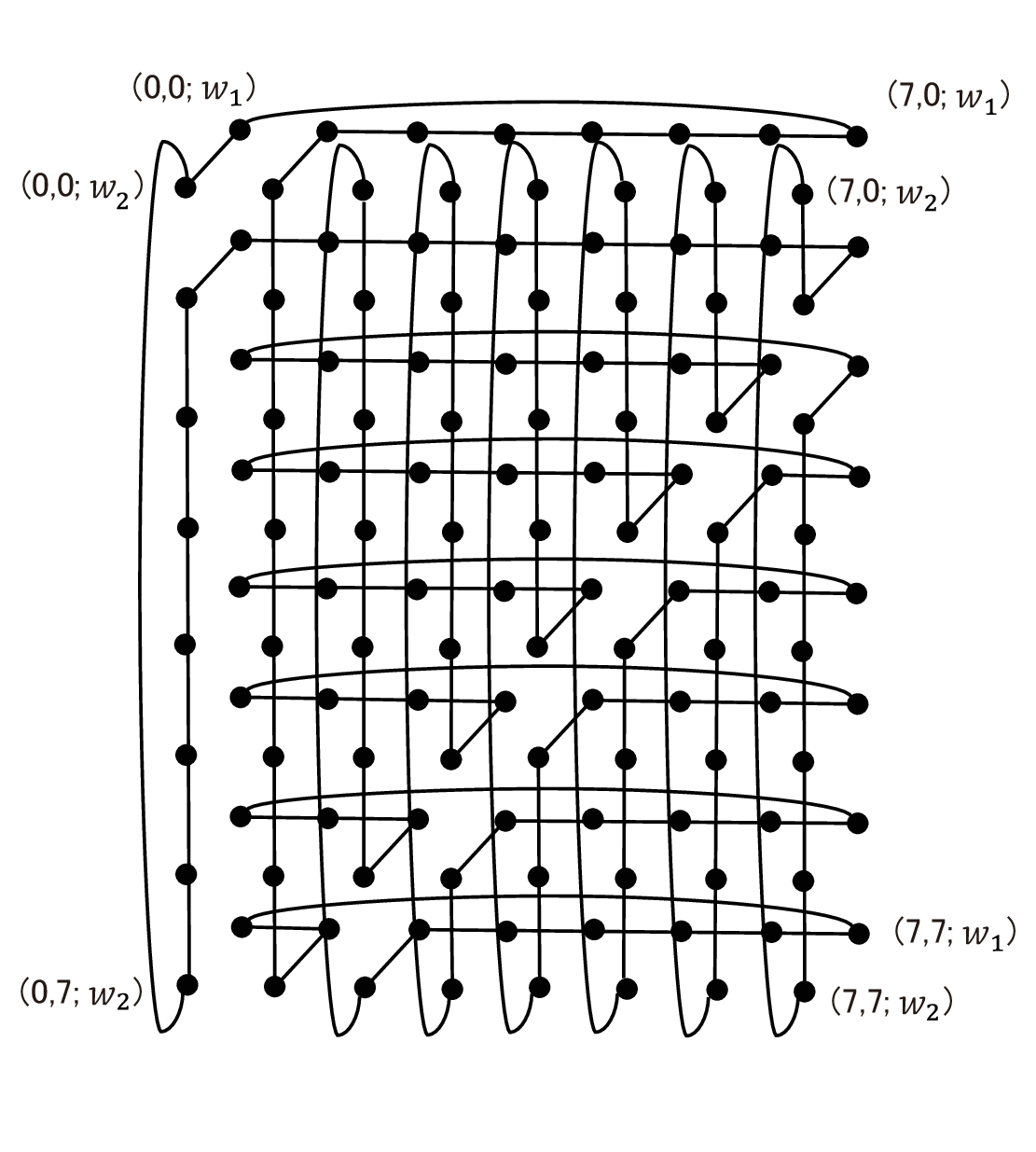,height=70mm}
\caption{The Hamiltonian cycle 
by the construction in the proof of Theorem \ref{Ham-Th1} for the case $p = 8$.}
\end{figure}

Fig. 2 illustrates the Hamiltonian cycle by 
the construction in the proof of Theorem \ref{Ham-Th1} for the case that $p = 8$.

\begin{lemma} \label{Ham-Lem1}
If a graph $G$ is even Hamiltonian,
then for any $n \geq 2$, 
the Cartesian product graph $G^{[n]}$ has a Hamiltonian cycle in which no two consecutive edges have
the same dimension.
\end{lemma}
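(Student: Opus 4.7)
The natural plan is induction on $n$, with the base case $n=2$ carrying most of the work because only two dimensions are available and the non-repetition condition there forces strict alternation. Write $v_0 v_1 \cdots v_{p-1} v_0$ for a Hamiltonian cycle of $G$; by hypothesis $p = |V(G)|$ is even.

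For the base case, I would exhibit the desired Hamiltonian cycle of $G^{[2]}$ as a concatenation of ``two-column zigzags.'' For each $j \in \{0, 2, \ldots, p-2\}$, define
\[
Z_j : \;(v_0, v_j), (v_1, v_j), (v_1, v_{j+1}), (v_2, v_{j+1}), (v_2, v_j), (v_3, v_j), \ldots, (v_{p-1}, v_j), (v_{p-1}, v_{j+1}), (v_0, v_{j+1}),
\]
which sweeps the columns $v_j$ and $v_{j+1}$ by alternating dimension-1 and dimension-2 steps. Because $p$ is even, $Z_j$ has length $2p-1$, begins and ends with a dimension-1 edge, and its last edge uses the cycle edge $v_{p-1}v_0 \in E(G)$. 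I would then concatenate $Z_0, Z_2, \ldots, Z_{p-2}$, inserting between $Z_j$ and $Z_{j+2}$ the dimension-2 edge $(v_0, v_{j+1})(v_0, v_{j+2})$, and close with the dimension-2 edge $(v_0, v_{p-1})(v_0, v_0)$. The result is a Hamiltonian cycle of $G^{[2]}$ whose edges alternate strictly between dimensions 1 and 2.

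For the inductive step ($n \geq 3$), the induction hypothesis supplies a Hamiltonian cycle $C^\ast = u_1 u_2 \cdots u_N u_1$ of $G^{[n-1]}$ with $N = p^{n-1}$ in which no two consecutive edges share a dimension; after viewing $G^{[n]} \cong G \times G^{[n-1]}$, the dimensions of $C^\ast$ correspond to dimensions $2, 3, \ldots, n$ of $G^{[n]}$. For each $v_i \in V(G)$ let $C^\ast_i$ denote the natural copy of $C^\ast$ on $\{(v_i, u) : u \in V(G^{[n-1]})\}$. I would traverse $C^\ast_i$ minus the edge $u_N u_1$ in the forward direction when $i$ is even and in reverse when $i$ is odd, linking consecutive copies by a dimension-1 edge $(v_i, u_N)(v_{i+1}, u_N)$ or $(v_i, u_1)(v_{i+1}, u_1)$ at their common endpoint, and finally closing the cycle via $(v_{p-1}, u_1)(v_0, u_1)$, which is available because $p$ is even and $v_{p-1}v_0 \in E(G)$.

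The non-repetition check is then straightforward: within each $C^\ast_i$ the edges lie in dimensions $\{2, \ldots, n\}$ and inherit the property from the induction hypothesis, while each dimension-1 transition is flanked by edges whose dimensions lie in $\{2, \ldots, n\}$, so no two consecutive edges ever share a dimension (including at the closing edge). The main obstacle is the base case $n=2$: with only two dimensions the cycle must alternate strictly, forcing the zigzag paths $Z_j$ to align both internally and at their boundaries, and it is precisely the evenness of $p$ that makes this alignment possible. Once $n=2$ is in hand, the extra slack in dimension for $n \geq 3$ makes the serpentine-through-copies construction go through without difficulty.
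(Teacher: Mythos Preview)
Your proof is correct and follows essentially the same strategy as the paper: an explicit alternating Hamiltonian cycle in $G^{[2]}$ for the base case, followed by a serpentine induction that threads forward and reversed copies of the lower-dimensional cycle together via edges in the new dimension. The only cosmetic differences are that your base-case zigzag sweeps column pairs while the paper's runs along diagonals, and you prepend the new copy of $G$ (making the connectors dimension~$1$) whereas the paper appends it (making them dimension~$n$); neither change affects the argument.
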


\begin{proof}
Let $G$ be an even Hamiltonian graph with $V(G) = \{0,1,\ldots,p-1 \}$
such that
$(0,1,\ldots,p-1,0)$ is a Hamiltonian cycle in $G$. 
We first construct a desired Hamiltonian cycle when $n = 2$.
Define a closed walk $C$ as follows, where ``$\xrightarrow[1]{}$" 
and ``$\xrightarrow[2]{}$" indicate a move by increasing the first and second elements
in a 2-tuple corresponding to a vertex of $G^{[2]}$, 
respectively, and  ``$\xrightarrow[2-]{}$" indicates a move by decreasing the second element.

$$
\begin{array}{clclclclcl} 
& (0,0) & \xrightarrow[1]{} & (1,0) & \xrightarrow[2]{} & (1,1)
& \xrightarrow[1]{} \cdots \xrightarrow[2]{} & (p-1,p-1) 
& \xrightarrow[1]{} & (0,p-1) \\
\xrightarrow[2-]{} 
& (0,p-2) & \xrightarrow[1]{} & (1,p-2) & \xrightarrow[2]{} & (1,p-1)
& \xrightarrow[1]{} \cdots \xrightarrow[2]{} & (p-1,p-3) 
&\xrightarrow[1]{} & (0,p-3) \\
\xrightarrow[2-]{} 
& (0,p-4) & \xrightarrow[1]{} & (1,p-4) & \xrightarrow[2]{} & (1,p-3)
& \xrightarrow[1]{} \cdots \xrightarrow[2]{} & (p-1,p-5) 
&\xrightarrow[1]{} & (0,p-5) \\
\xrightarrow[2-]{} 
& (0,p-6) & \xrightarrow[1]{} & (1,p-6) & \xrightarrow[2]{} & (1,p-5)
& \xrightarrow[1]{} \cdots \xrightarrow[2]{} & (p-1,p-7) 
& \xrightarrow[1]{} & (0,p-7) \\
& \ \ \ \ \ \vdots & & \ \ \ \ \ \vdots & & \ \ \ \ \ \vdots & 
& \ \ \ \ \ \vdots & & 
\ \ \ \ \ \vdots \\ 
\xrightarrow[2-]{} 
& (0,2) & \xrightarrow[1]{} & (1,2) & \xrightarrow[2]{} & (1,3)
& \xrightarrow[1]{} \cdots \xrightarrow[2]{} & (p-1,1) 
& \xrightarrow[1]{} & (0,1) \\
\xrightarrow[2-]{} 
& (0,0). & & & & & & & & \\ 
\end{array}
$$
It can be checked that $C$ is a desired Hamiltonian cycle in $G^{[2]}$.
Fig. 3 shows the Hamiltonian cycle $C$ for the case that $p = 8$.

Based on the Hamiltonian cycle $C$, we construct a Hamiltonian cycle $D$ in $G^{[3]}$ as follows,
where ``$\xrightarrow[C]{}$" (respectively, ``$\xrightarrow[C^{R}]{}$") 
indicates moving through edges of dimensions 1 or 2 according to $C$ in order (respectively, reverse order) 
and 
``$\xrightarrow[3]{}$" indicate a move by increasing the third element in a 3-tuple
corresponding to a vertex of $G^{[3]}$. 

$$
\begin{array}{clclclcl}
&  (0,0,0) & \xrightarrow[C]{} & (0,1,0) & \xrightarrow[3]{} & (0,1,1) & 
\xrightarrow[C^{R}]{} & (0,0,1) \\
\xrightarrow[3]{} 
&  (0,0,2) & \xrightarrow[C]{} & (0,1,2) & \xrightarrow[3]{} & (0,1,3) & 
\xrightarrow[C^{R}]{} & (0,0,3) \\
\xrightarrow[3]{} 
&  (0,0,4) & \xrightarrow[C]{} & (0,1,4) & \xrightarrow[3]{} & (0,1,5) & 
\xrightarrow[C^{R}]{} & (0,0,5) \\
& \ \ \ \ \ \vdots & & \ \ \ \ \ \vdots & & \ \ \ \ \ \vdots & & 
\ \ \ \ \ \vdots \\ 
\xrightarrow[3]{} 
&  (0,0,p-2) & \xrightarrow[C]{} & (0,1,p-2) 
& \xrightarrow[3]{} & (0,1,p-1) & 
\xrightarrow[C^{R}]{} & (0,0,p-1) \\
\xrightarrow[3]{} 
&  (0,0,0). & & & & & & \\
\end{array}
$$ 
Note that no two consecutive edges have the same dimension in $D$, i.e.,
$D$ is a desired Hamiltonian cycle in $G^{[3]}$.
Based on $D$,
we can similarly construct a desired Hamiltonian cycle in $G^{[4]}$ as follows. 

$$
\begin{array}{clclclcl}
&  (0,0,0,0) & \xrightarrow[D]{} & (0,0,p-1,0) & \xrightarrow[4]{} & (0,0,p-1,1) & 
\xrightarrow[D^{R}]{} & (0,0,0,1) \\
\xrightarrow[4]{} 
&  (0,0,0,2) & \xrightarrow[D]{} & (0,0,p-1,2) & \xrightarrow[4]{} & (0,0,p-1,3) & 
\xrightarrow[D^{R}]{} & (0,0,0,3) \\
\xrightarrow[4]{} 
&  (0,0,0,4) & \xrightarrow[D]{} & (0,0,p-1,4) & \xrightarrow[4]{} & (0,0,p-1,5) & 
\xrightarrow[D^{R}]{} & (0,0,0,5) \\
& \ \ \ \ \ \vdots & & \ \ \ \ \ \vdots & & \ \ \ \ \ \vdots & & 
\ \ \ \ \ \vdots \\ 
\xrightarrow[4]{} 
&  (0,0,0,p-2) & \xrightarrow[D]{} & (0,0,p-1,p-2) 
& \xrightarrow[4]{} & (0,0,p-1,p-1) & 
\xrightarrow[D^{R}]{} & (0,0,0,p-1) \\
\xrightarrow[4]{} 
&  (0,0,0,0). & & & & & & \\
\end{array}
$$ 

\begin{figure}[t]
\centering
\epsfig{file=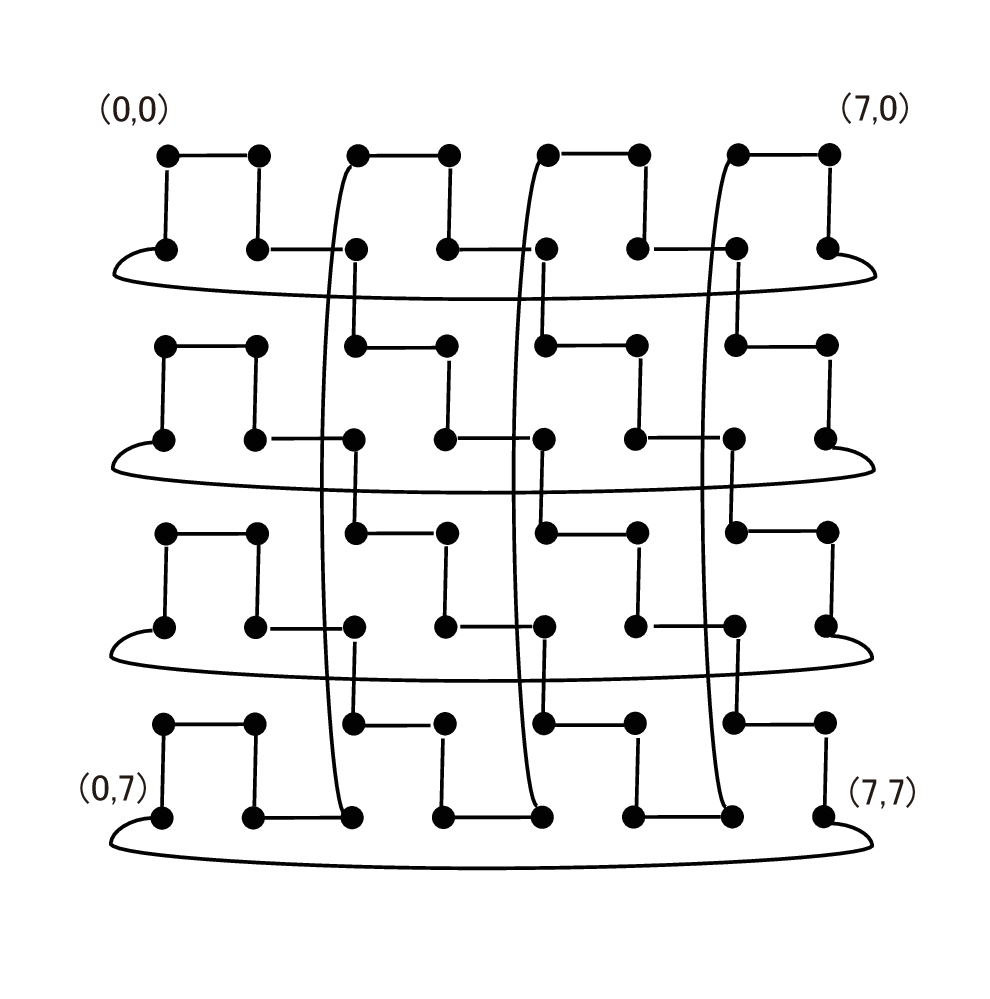,height=60mm}
\caption{The Hamiltonian cycle $C$ by the construction in the proof of Lemma \ref{Ham-Lem1} for
the case $p = 8$.}
\end{figure}

Iteratively applying a similar construction, we can obtain
a desirable Hamiltonian cycle in $G^{[n]}$ for any $n \geq 5$.
\end{proof}

\bigskip

Using Lemma \ref{Ham-Lem1},
we show the following theorem. 

\begin{theorem} \label{Ham-Th2}
If a graph $G$ is even Hamiltonian and a graph $H$ is Hamiltonian-connected,
then the exponential graph $G^H$ is Hamiltonian.
\end{theorem}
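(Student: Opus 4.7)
The plan is to lift a carefully chosen Hamiltonian cycle of $G^{[q]}$, where $q=|V(H)|$, to a Hamiltonian cycle of $G^H$ by routing through each copy $H_\rho$ of $H$ via a suitable Hamiltonian path whose existence is guaranteed by Hamiltonian-connectedness.

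First I would invoke Lemma \ref{Ham-Lem1}: since $G$ is even Hamiltonian and $q\ge 2$ (the case $q=1$ gives $G^H\cong G$, which is trivially Hamiltonian), there exists a Hamiltonian cycle $C=(\rho^{(0)},\rho^{(1)},\ldots,\rho^{(p^q-1)},\rho^{(0)})$ in $G^{[q]}$ in which no two consecutive edges share a dimension. For each index $m$, let $j_m\in\{1,\ldots,q\}$ denote the dimension of the edge $\rho^{(m)}\rho^{(m+1)}$ (indices modulo $p^q$). By the property of $C$, at every vertex $\rho^{(m)}$ the incoming and outgoing dimensions satisfy $j_{m-1}\neq j_m$.

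Next I would translate this into $G^H$. Recall that the edge of dimension $j$ in $G^{[q]}$ between $\rho^{(m)}$ and $\rho^{(m+1)}$ corresponds, in $G^H$, to the $G$-edge of dimension $j$ between the vertices $(\rho^{(m)};w_j)$ and $(\rho^{(m+1)};w_j)$. So the cycle $C$ provides, for every $m$, a $G$-edge of $G^H$ leaving $H_{\rho^{(m)}}$ at the vertex $(\rho^{(m)};w_{j_m})$ and a $G$-edge entering $H_{\rho^{(m)}}$ at the vertex $(\rho^{(m)};w_{j_{m-1}})$. Because $j_{m-1}\neq j_m$, these two attachment points are distinct vertices of $H_{\rho^{(m)}}\cong H$. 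Since $H$ is Hamiltonian-connected, there is a Hamiltonian path $R_m$ in $H_{\rho^{(m)}}$ from $(\rho^{(m)};w_{j_{m-1}})$ to $(\rho^{(m)};w_{j_m})$ that visits every vertex of $H_{\rho^{(m)}}$ exactly once using only $H$-edges.

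Finally I would concatenate, for $m=0,1,\ldots,p^q-1$, the path $R_m$ followed by the $G$-edge of dimension $j_m$ from $(\rho^{(m)};w_{j_m})$ to $(\rho^{(m+1)};w_{j_m})$. The endpoints match by construction, so this yields a closed walk in $G^H$. Since the $H_{\rho^{(m)}}$ partition $V(G^H)$ and each $R_m$ visits every vertex of $H_{\rho^{(m)}}$ exactly once, the closed walk visits every vertex of $G^H$ exactly once, and is therefore a Hamiltonian cycle.

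The main obstacle, and the only place where the full strength of both hypotheses is used, is ensuring that the two $G$-edges of $C$ meeting at each $\rho^{(m)}$ land on distinct vertices of $H_{\rho^{(m)}}$: this is exactly the dimension-alternation property from Lemma \ref{Ham-Lem1}, which in turn requires $G$ to be even Hamiltonian. Once that is in hand, Hamiltonian-connectedness of $H$ supplies the internal Hamiltonian paths $R_m$ between the two prescribed endpoints, and the gluing is routine.
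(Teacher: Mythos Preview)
Your proposal is correct and follows essentially the same approach as the paper: invoke Lemma~\ref{Ham-Lem1} to obtain a Hamiltonian cycle of $G^{[q]}$ with no two consecutive edges of the same dimension, observe that this forces the two attachment points in each $H_{\rho}$ to be distinct, and then use Hamiltonian-connectedness of $H$ to supply a Hamiltonian path of $H_{\rho}$ between them before gluing everything together. The paper's proof is slightly terser but the argument is the same.
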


\begin{proof}
Suppose that $G$ is an even Hamiltonian graph and 
$H$ is a Hamiltonian-connected graph of order $q$.
By Lemma \ref{Ham-Lem1}, $G^{[q]}$ has a Hamiltonian cycle $C^\ast$ 
in which no two consecutive edges have the same dimension.
Let $S$ be the set of $G$-edges in $E(G^H)$ corresponding to the edges in $C^\ast$.
For each $H_u$ where $u \in V(G^{[q]})$ in $G^H$, 
there are exactly two $G$-edges in $S$ such that
they are incident with distinct vertices $x$ and $y$ in $H_u$.
Since $H$ is Hamiltonian-connected, there is a Hamiltonian path $P_u$ from $x$ to $y$ in $H_u$.
Combining the $G$-edges in $S$ and $P_u$ for each $H_u$ where $u \in V(G^{[q]})$, 
we have a Hamiltonian cycle in $G^H$.
\end{proof}

\bigskip

In the proof of Lemma \ref{Ham-Lem1},
we actually construct a Hamiltonian cycle in which no two consecutive edges have
the same dimension such that every edge of dimension $i \geq 2$ is adjacent to only 
an edge of dimension 1.
Thus, the assumption that $H$ is Hamiltonian-connected in Theorem \ref{Ham-Th2} 
can be weakened to
the condition that there exists a vertex $w$ in $H$ such that for any vertex $v \neq w$,
there exists a Hamiltonian path from $w$ to $v$.

From Theorem \ref{Ham-Th2}, we have the following corollary which 
extends Theorem \ref{Ham-Th1} for even graphs.

\begin{corollary} \label{Ham-Cor}
If a graph $G$ is even Hamiltonian, then
the exponential graph $G^{K_n}$ is even Hamiltonian for any $n \geq 2$. 
\end{corollary}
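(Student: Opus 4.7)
The plan is to apply Theorem \ref{Ham-Th2} essentially directly, after a one-line check that $K_n$ is Hamiltonian-connected and a one-line parity count for the order. First I would verify that for every $n \geq 2$, the complete graph $K_n$ is Hamiltonian-connected: for $n = 2$ the single edge joining the two vertices is itself a Hamiltonian path, while for $n \geq 3$, given any two chosen endpoints, an arbitrary ordering of the remaining $n-2$ vertices between them produces a Hamiltonian path, since every pair of vertices of $K_n$ is adjacent.

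With $H = K_n$ thus satisfying the hypotheses of Theorem \ref{Ham-Th2} (together with the assumption that $G$ is even Hamiltonian), the theorem immediately yields that $G^{K_n}$ is Hamiltonian. It then remains to observe that $G^{K_n}$ has even order: by Proposition \ref{Prop1}(1),
$$\left|V\left(G^{K_n}\right)\right| = |V(G)|^{n} \cdot n,$$
which is even because $|V(G)|$ is even by hypothesis, so $|V(G)|^n$ (and hence the product) is even. Combining these two facts gives the claim, and there is no substantive obstacle: the corollary is simply a specialization of Theorem \ref{Ham-Th2} to the Hamiltonian-connected exponents $K_n$, paired with a trivial parity computation.
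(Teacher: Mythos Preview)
Your proposal is correct and follows exactly the paper's intended route: the corollary is stated as an immediate consequence of Theorem~\ref{Ham-Th2}, and you have simply filled in the two obvious details (that $K_n$ is Hamiltonian-connected for all $n \geq 2$, and that $|V(G^{K_n})| = |V(G)|^n \cdot n$ is even).
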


For any positive interger $n$, the $n$-th {\it power} of a graph $G$ denoted $G^n$ is defined as follows:
$$\left\{
\begin{array}{ll}
V(G^n) & = V(G), \\
E(G^n) & = \{ uv \ |\ {\rm dist}_G(u,v) \leq n \}.
\end{array} \right.$$
In particular, $G^2$ and $G^3$ are called the {\it square} and {\it cube} of $G$, respectively.
For the square and cube of a graph, 
the following fundamental results concerning Hamiltonian-connectedness have been shown.

\begin{theorem} {\rm (Chartrand et. al \cite{Ch-et-al})}
For any 2-connected graph $G$, the square $G^2$ is Hamiltonian-connected.
\end{theorem}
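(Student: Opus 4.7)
The plan is to reduce Hamiltonian-connectedness of $G^{2}$ to the Hamiltonicity of the square of a slightly enlarged 2-connected graph. Given distinct $u, v \in V(G)$, I would form $G'$ by attaching a single new vertex $w$ together with only the two new edges $wu$ and $wv$. First I would check that $G'$ remains 2-connected: removing $w$ leaves the 2-connected graph $G$, while removing any vertex $x \in V(G)$ leaves $G - x$ connected (by 2-connectedness of $G$) with $w$ still attached via whichever of $u, v$ differs from $x$ (at least one does, since $u \ne v$).

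Next I would invoke the strong form of Fleischner's theorem: if $H$ is 2-connected and $e_{1}, e_{2}$ are edges of $H$ sharing a common endpoint, then $H^{2}$ has a Hamiltonian cycle using both $e_{1}$ and $e_{2}$. Applied with $H = G'$ and the two edges $wu, wv$ at $w$, this yields a Hamiltonian cycle $C$ of $G'^{2}$ whose two edges at $w$ are precisely $wu$ and $wv$. Removing $w$ from $C$ produces a Hamiltonian path $P$ in $G'^{2} - w$ with endpoints $u$ and $v$.

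It then remains to verify that every edge of $P$ lies in $G^{2}$. A short distance computation shows that $G'^{2} - w$ differs from $G^{2}$ by at most the single edge $uv$, which is contributed by the new length-2 path $u$-$w$-$v$ exactly when $\mathrm{dist}_{G}(u,v) > 2$; every other pair of vertices of $V(G)$ at distance $\leq 2$ in $G'$ is also at distance $\leq 2$ in $G$. Since $G$ is 2-connected we have $|V(G)| \geq 3$, so $P$ has length at least $2$; the potentially spurious edge $uv$ could only arise in $P$ as a direct step between its two endpoints, which is impossible for a Hamiltonian path of length $\geq 2$. Hence $P$ uses only edges of $G^{2}$ and is the desired Hamiltonian $u$-$v$ path.

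The principal obstacle is securing the strong form of Fleischner's theorem, which is considerably more delicate than the bare Hamiltonicity of $G^{2}$ for 2-connected $G$. If only the basic form were available, one would need either to modify the augmentation (for instance, replace $w$ by an internal length-3 path $u$-$w_{1}$-$w_{2}$-$v$ whose two degree-2 internal vertices have more tightly controlled cycle-neighbors) or to adapt the underlying EPS-subgraph construction in Fleischner's proof directly to produce the required Hamiltonian $u$-$v$ path in $G^{2}$.
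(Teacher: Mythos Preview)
The paper does not prove this theorem; it is quoted from the literature (Chartrand, Hobbs, Jung, Kapoor, and Nash-Williams, 1974) and used as a black box to derive a corollary about exponential graphs. There is therefore no ``paper's own proof'' to compare your attempt against.

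That said, your outline is essentially the argument in the original Chartrand et al.\ paper. They too adjoin a new vertex $w$ adjacent only to $u$ and $v$, observe that the augmented graph $G'$ is 2-connected, and then invoke the strengthened Fleischner statement that for any vertex of a 2-connected graph and any two edges incident with it, the square contains a Hamiltonian cycle through both edges. Deleting $w$ from the resulting cycle gives the desired Hamiltonian $u$--$v$ path, and your check that no spurious edge of $(G')^{2}\setminus G^{2}$ can appear on this path is correct. So the reduction step is sound and matches the published proof.

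Your own diagnosis of the gap is accurate: everything hinges on the strong form of Fleischner's theorem, which is not a triviality but is exactly what Fleischner's original EPS-decomposition argument delivers. Chartrand et al.\ do not reprove it either; their short note simply extracts this consequence. Your suggested fallback of subdividing the attached edges is also workable and appears in later treatments, but it still ultimately rests on the same structural machinery. In short, the proposal is correct as a proof sketch, with the one nontrivial ingredient clearly flagged.
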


\begin{theorem} {\rm (Karaganis \cite{Ka}, Sekanina \cite{Se})}
For any connected graph $G$, the cube $G^3$ is Hamiltonian-connected.
\end{theorem}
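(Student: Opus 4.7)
The plan is to reduce the problem to trees and then proceed by induction on the number of vertices. For the reduction, let $G$ be any connected graph and let $T$ be a spanning tree of $G$. Since ${\rm dist}_T(x,y) \geq {\rm dist}_G(x,y)$ for all $x,y \in V(G)$, every edge of $T^3$ is also an edge of $G^3$, so $T^3$ is a spanning subgraph of $G^3$. Consequently, if $T^3$ is Hamiltonian-connected, so is $G^3$, and it remains to prove the theorem when $G$ is a tree.

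For a tree $T$ with $|V(T)| \geq 2$, I would prove by induction on $|V(T)|$ that for any two distinct $u,v \in V(T)$, the cube $T^3$ contains a Hamiltonian $u$-$v$ path. The base cases $|V(T)| \leq 3$ are immediate. For the inductive step, consider the unique $u$-$v$ path $P = (u = v_0, v_1, \ldots, v_k = v)$ in $T$. If $T = P$ we are done, since $P$ itself is a Hamiltonian $u$-$v$ path. Otherwise, some subtree $S$ hangs off a vertex $v_i$ through a neighbour $w$ of $v_i$ not on $P$. I would apply the inductive hypothesis inside $S$ to obtain a Hamiltonian path in $S^3$ from $w$ to some vertex $w'$, and build the Hamiltonian $u$-$v$ path in $T^3$ by walking along $P$ from $u$ toward $v$, diverting into each such subtree $S$ via a $T^3$-edge out of some $v_j$, traversing $S$ by its promised internal Hamiltonian path, and returning to $P$ via another $T^3$-edge to continue toward $v$.

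The main obstacle is the splicing step: at each detour into a subtree $S$, one must simultaneously prescribe both the entry vertex and the exit vertex of its internal Hamiltonian path, so that both splice edges lie in $T^3$, i.e., join vertices at $T$-distance at most $3$. To make this work, the inductive hypothesis has to be strengthened to allow flexibility in the choice of endpoints inside each subtree, typically by allowing them to be shifted to a $T$-neighbour or to lie within a small prescribed radius of a designated root vertex. Several small-tree corner cases (stars, subtrees with too few vertices to absorb a detour, subtrees attached to $u$ or $v$ themselves) must be handled separately. The cleanest formalisation, essentially that of Sekanina, proves a stronger auxiliary statement simultaneously guaranteeing a Hamiltonian $u$-$v$ path in $T^3$ together with control over which neighbours of $u$ and $v$ appear as the second and second-to-last vertices; this extra bookkeeping is what makes the induction close.
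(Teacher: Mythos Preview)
The paper does not prove this theorem at all; it is simply quoted with attribution to Karaganis and Sekanina and then used as a black box to obtain the subsequent corollary. So there is nothing in the paper to compare your argument against.

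Regarding your proposal on its own merits: the reduction to trees via a spanning tree is correct, and the inductive plan is the right one. You also correctly identify the real issue, namely that the naive hypothesis ``$T^3$ has a Hamiltonian $u$--$v$ path for all $u,v$'' does not close under the splicing step, and that one must strengthen it. However, your write-up stops exactly at that point: the final paragraph is a description of the obstacle and a gesture toward ``extra bookkeeping,'' not a proof. As written it is an outline, not an argument.

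For the record, the clean strengthening that makes the induction go through is: \emph{for every tree $T$ with $|V(T)|\ge 2$ and every edge $uv\in E(T)$, the cube $T^3$ contains a Hamiltonian $u$--$v$ path.} Deleting $uv$ splits $T$ into $T_u\ni u$ and $T_v\ni v$; choosing edges $ux\in E(T_u)$ and $vy\in E(T_v)$ (when these subtrees are nontrivial) and applying the hypothesis yields Hamiltonian paths $u\to x$ in $T_u^3$ and $y\to v$ in $T_v^3$, which splice via the $T^3$-edge $xy$ since $\mathrm{dist}_T(x,y)=3$. The general Hamiltonian-connectedness then follows by decomposing $T$ along the $u$--$v$ path into subtrees $T_i\ni v_i$, traversing each $T_i$ from $v_i$ to a neighbour $w_i$ by the strengthened statement, and linking consecutive pieces by edges $w_iv_{i+1}$ (distance $2$) or $w_{k-1}w_k$ (distance $3$) as needed. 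If you want to claim a proof rather than a sketch, this is the missing content.
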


From these results and Theorem \ref{Ham-Th2},
we have the following corollary. 

\begin{corollary}
If a graph $G$ is even Hamiltonian and a graph $H$ is 2-connected (respectively, connected), 
then the exponential graph $G^{H^2}$ (respectively, $G^{H^3}$) is Hamiltonian.
\end{corollary}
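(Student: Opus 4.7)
The plan is to obtain the corollary as an immediate concatenation of Theorem \ref{Ham-Th2} with the two classical Hamiltonian-connectedness results for powers of graphs just quoted. In other words, I would not prove anything new; I would simply verify that the hypotheses of Theorem \ref{Ham-Th2} are satisfied when the exponent is $H^2$ or $H^3$, and then quote the theorem.

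More precisely, first suppose $G$ is even Hamiltonian and $H$ is $2$-connected. I would invoke the theorem of Chartrand et al.\ to assert that the square $H^2$ is Hamiltonian-connected. Since $H^2$ has the same vertex set as $H$, it is a genuine graph that can play the role of the exponent in the exponential operation. Then Theorem \ref{Ham-Th2}, applied with base $G$ (even Hamiltonian by hypothesis) and exponent $H^2$ (Hamiltonian-connected by the previous sentence), yields directly that $G^{H^2}$ is Hamiltonian.

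Second, suppose $G$ is even Hamiltonian and $H$ is merely connected. By the theorem of Karaganis and Sekanina, the cube $H^3$ is Hamiltonian-connected. Applying Theorem \ref{Ham-Th2} again, now with exponent $H^3$, gives that $G^{H^3}$ is Hamiltonian. Both cases can be packaged into a single short proof.

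There is essentially no obstacle here: the only point worth a sentence of care is recording that $H^2$ and $H^3$ are well-defined graphs on $V(H)$ so that the symbol $G^{H^2}$ and $G^{H^3}$ parse correctly under Definition~1, and that the assumption ``$G$ even Hamiltonian'' transfers verbatim into the hypothesis of Theorem \ref{Ham-Th2}. Since the underlying combinatorial work has already been done (Lemma \ref{Ham-Lem1}, Theorem \ref{Ham-Th2}, and the two cited Hamiltonian-connected power theorems), the proof will amount to little more than a chain of citations.
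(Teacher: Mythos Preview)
Your proposal is correct and matches the paper's approach exactly: the corollary is stated immediately after the theorems of Chartrand et al.\ and Karaganis--Sekanina with the remark ``From these results and Theorem \ref{Ham-Th2}, we have the following corollary,'' and no further proof is given. Your write-up simply spells out this chain of citations explicitly.
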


When the order of $G$ is at least 4, Lemma \ref{Ham-Lem1} can be significantly extended
as follows.

\begin{lemma} \label{Ham-Lem2}
If $G$ is an even Hamiltonian graph, 
then for any $n \geq 2$, 
the Cartesian product graph $G^{[n]}$ has two edge-disjoint Hamiltonian cycles 
in which no two consecutive edges have the same dimension.
\end{lemma}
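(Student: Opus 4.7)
I would prove this by induction on $n$, paralleling the inductive proof of Lemma \ref{Ham-Lem1}: first establish the base case $n = 2$ by producing two edge-disjoint Hamiltonian cycles $C_1, C_2$ in $G^{[2]}$ that each alternate between dimensions $1$ and $2$, then for $n \geq 3$ extend two such cycles in $G^{[n-1]}$ to $G^{[n]}$ via the same zigzag recipe, using disjoint dim-$n$ edges for the two extensions.

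For the base case, I would take $C_1$ to be the staircase Hamiltonian cycle $C$ already built in the proof of Lemma \ref{Ham-Lem1}, and take $C_2$ to be the reflected ``reverse-direction'' staircase which starts at $(0,0)$ and moves
$(0,0) \to (p-1, 0) \to (p-1, p-1) \to (p-2, p-1) \to (p-2, p-2) \to \cdots$, i.e., traverses the Hamiltonian cycle of $G$ in the reverse cyclic direction while decrementing rather than incrementing the second coordinate at each turn. A direct parity check shows that in each row each dim-$1$ edge $i(i{+}1)$ of $G$'s Hamiltonian cycle is used by $C_1$ at columns $j$ of one parity (depending on $i$) and by $C_2$ at columns $j$ of the opposite parity, with a symmetric statement for dim-$2$ edges in each column; hence the two cycles are edge-disjoint. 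Both cycles alternate dim-$1$ and dim-$2$ by construction, and the hypothesis $p \geq 4$ is what ensures the two staircases do not collapse onto each other.

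For the inductive step, suppose $D_1, D_2$ are edge-disjoint Hamiltonian cycles in $G^{[n-1]}$ satisfying the alternating-dimension condition. For each $k \in \{1,2\}$ pick an edge $e_k = u_k v_k \in E(D_k)$ so that the four endpoints $u_1, v_1, u_2, v_2$ are pairwise distinct; this is possible since $|V(G^{[n-1]})| = p^{n-1} \geq 4$ and, when choosing $e_2$, one need only avoid at most two forbidden vertices. Build $E_k$ from $D_k$ by the zigzag recipe of Lemma \ref{Ham-Lem1}: within each slice indexed by the $n$-th coordinate $i$, traverse $D_k - e_k$ from $u_k$ to $v_k$, reversing direction for alternating values of $i$, and glue consecutive slices via a dim-$n$ edge at $(u_k; i)$ or $(v_k; i)$ as dictated by the traversal direction. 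Disjointness of $\{u_1, v_1\}$ and $\{u_2, v_2\}$ guarantees that the dim-$n$ edges of $E_1$ are disjoint from those of $E_2$, and within each slice $E_1$ and $E_2$ use edges of $D_1$ and $D_2$, respectively, which are edge-disjoint by hypothesis; the alternating-dimension property is preserved because every dim-$n$ edge is flanked by edges of $D_k$ of dimension strictly less than $n$. The main obstacle is the base case, where one must verify edge-disjointness of the two staircases by careful parity bookkeeping over all rows and columns for every even $p \geq 4$; the inductive step is then a routine adaptation of Lemma \ref{Ham-Lem1}, with the $|V(G)| \geq 4$ hypothesis used exactly to secure the four distinct anchor vertices.
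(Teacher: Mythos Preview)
Your proposal is correct and follows essentially the same approach as the paper. Your reversed staircase $C_2$ is exactly the paper's cycle $C'$ (obtained from $C$ by swapping each $\xrightarrow[i]{}$ with $\xrightarrow[i-]{}$), and your inductive zigzag with disjoint anchor edges is the abstract version of the paper's construction, which fixes the specific anchors $(0,0)(0,1)\in C$ and $(p-1,p-1)(p-1,0)\in C'$; the only cosmetic difference is that the paper proves base-case disjointness by the ``shared edge forces equality'' rigidity of the moving rules rather than by your parity bookkeeping.
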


\begin{proof}
Let $G$ be an even Hamiltonian graph with a Hamiltonian cycle 
$(0,1,\ldots,p-1,0)$ where $p \geq 4$. 
Define a closed walk $C'$ as follows.
$$
\begin{array}{clclclclcl} 
& (0,0) & \xrightarrow[1-]{} & (p-1,0) & \xrightarrow[2-]{} & (p-1,p-1)
& \xrightarrow[1-]{} \cdots \xrightarrow[2-]{} & (1,1) 
& \xrightarrow[1-]{} & (0,1) \\
\xrightarrow[2]{} 
& (0,2) & \xrightarrow[1-]{} & (p-1,2) & \xrightarrow[2-]{} & (p-1,1)
& \xrightarrow[1-]{} \cdots \xrightarrow[2-]{} & (1,3) 
&\xrightarrow[1-]{} & (0,3) \\
\xrightarrow[2]{} 
& (0,4) & \xrightarrow[1-]{} & (p-1,4) & \xrightarrow[2-]{} & (p-1,3)
& \xrightarrow[1-]{} \cdots \xrightarrow[2-]{} & (1,5) 
&\xrightarrow[1-]{} & (0,5) \\
\xrightarrow[2]{} 
& (0,6) & \xrightarrow[1-]{} & (p-1,6) & \xrightarrow[2-]{} & (p-1,5)
& \xrightarrow[1-]{} \cdots \xrightarrow[2-]{} & (1,7) 
& \xrightarrow[1-]{} & (0,7) \\
& \ \ \ \ \ \vdots & & \ \ \ \ \ \vdots & & \ \ \ \ \ \vdots & 
& \ \ \ \ \ \vdots & & \ \ \ \ \ \vdots \\ 
\xrightarrow[2]{} 
& (0,p-2) & \xrightarrow[1-]{} & (p-1,p-2) & \xrightarrow[2-]{} & (p-1,p-3)
& \xrightarrow[1-]{} \cdots \xrightarrow[2-]{} & (1,p-1) 
& \xrightarrow[1-]{} & (0,p-1) \\
\xrightarrow[2]{} 
& (0,0). & & & & & & & &  \\ 
\end{array}
$$
It can be checked that $C'$ is a Hamiltonian cycle in which no two consecutive edges
have the same dimension.
Note that the moving rules in $C'$ can be obtained from those in the Hamiltonian cycle $C$
in the proof of Lemma \ref{Ham-Lem1} by exchanging `` $\xrightarrow[i]{}$" and ``$\xrightarrow[i-]{}$"
for each $i \in \{1,2\}$.
Suppose that $(i,j)(i,j+1) \in E(C') \cap E(C)$.
Then, from the moving rules of $C'$ and $C$, it follows that $C' = C$.
Similarly, if $(i,j)(i+1,j) \in E(C') \cap E(C)$, then we have $C' = C$.
Since $(0,0)(1,0) \in E(C) \setminus E(C')$, it is concluded that $E(C) \cap E(C') = \emptyset$.
Therefore, $G^{[2]}$ has two edge-disjoint Hamiltonian cycles 
in which no two consecutive edges have the same dimension.
Fig. 4 illustrates the Hamiltonian cycle $C'$ for the case that $p = 8$.
(In particular, when $G \cong C_p$, $G^{[2]}$ is decomposed into two Hamiltonian cycles,
i.e., $E(G) = E(C') \cup E(C)$.)

Using the Hamiltonian path between $(p-1,p-1)$ and $(p-1,0)$ in $C'$, 
we construct a Hamiltonian cycle $D'$ in $G^{[3]}$ as follows.
Note that for $D$ in the proof of Lemma \ref{Ham-Lem1},
we use the Hamiltonian path between $(0,0)$ and $(0,1)$.
Since $\{(p-1,p-1),(p-1,0)\} \cap \{(0,0),(0,1)\} = \emptyset$,
$D'$ and $D$ are edge-disjoint.

$$
\begin{array}{clclclcl}
&  (p-1,p-1,0) & \xrightarrow[C']{} & (p-1,0,0) & \xrightarrow[3]{} & (p-1,0,1) & 
\xrightarrow[(C')^{R}]{} & (p-1,p-1,1) \\
\xrightarrow[3]{} 
&  (p-1,p-1,2) & \xrightarrow[C']{} & (p-1,0,2) & \xrightarrow[3]{} & (p-1,0,3) & 
\xrightarrow[(C')^{R}]{} & (p-1,p-1,3) \\
\xrightarrow[3]{} 
&  (p-1,p-1,4) & \xrightarrow[C']{} & (p-1,0,4) & \xrightarrow[3]{} & (p-1,0,5) & 
\xrightarrow[(C')^{R}]{} & (p-1,p-1,5) \\
& \ \ \ \ \ \vdots & & \ \ \ \ \ \vdots & & \ \ \ \ \ \vdots & & 
\ \ \ \ \ \vdots \\ 
\xrightarrow[3]{} 
&  (p-1,p-1,p-2) & \xrightarrow[C']{} & (p-1,0,p-2) 
& \xrightarrow[3]{} & (p-1,0,p-1) & 
\xrightarrow[(C')^{R}]{} & (p-1,p-1,p-1) \\
\xrightarrow[3]{} 
&  (p-1,p-1,0). & & & & & & \\
\end{array}
$$ 

Iteratively applying a similar construction, we can obtain
a desirable Hamiltonian cycle in $G^{[n]}$ which is edge-disjoint with the one
obtained in the proof of Lemma \ref{Ham-Lem1} for any $n \geq 4$.
Therefore, we have the desired result.
\end{proof}

\begin{figure}[t]
\centering
\epsfig{file=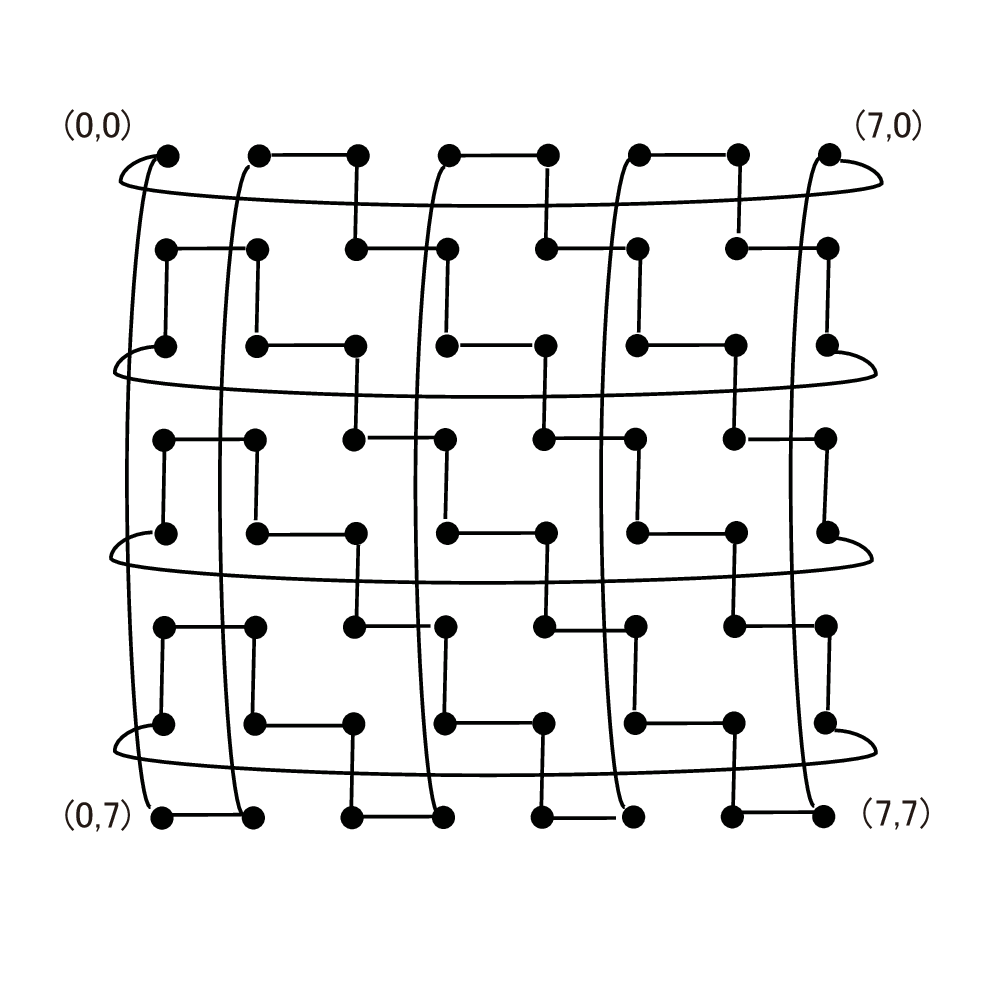,height=60mm}
\caption{The Hamiltonian cycle $C'$ by the construction in the proof of Lemma \ref{Ham-Lem2} for
the case $p = 8$.}
\end{figure}

\bigskip

Based on Lemma \ref{Ham-Lem2}, we have the following result which 
strengthens Corollary \ref{Ham-Cor} when $n \geq 4$. 

\begin{theorem} \label{Ham-Th2}
If $G$ is an even Hamiltonian graph, 
then the exponential graph $G^{K_n}$ where $n \geq 4$ has two edge-disjoint Hamiltonian cycles.
\end{theorem}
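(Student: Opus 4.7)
The plan is to lift Lemma~\ref{Ham-Lem2} from $G^{[n]}$ to $G^{K_n}$ in parallel for two cycles at once, in the same spirit as the proof of Theorem~\ref{Ham-Th2} but inserting a pair of edge-disjoint Hamiltonian paths through each copy of $K_n$ instead of a single path.

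First I would apply Lemma~\ref{Ham-Lem2} (which is available since $G$ is even Hamiltonian and $n \geq 4 \geq 2$) to obtain two edge-disjoint Hamiltonian cycles $C_1^\ast$ and $C_2^\ast$ of $G^{[n]}$ in which no two consecutive edges have the same dimension. Let $S_i \subseteq E(G^{K_n})$ be the set of $G$-edges corresponding to the edges of $C_i^\ast$. Then $S_1 \cap S_2 = \emptyset$, and for each $u \in V(G^{[n]})$ the dimension-alternation property of $C_i^\ast$ guarantees that $S_i$ contributes exactly two $G$-edges at the copy $H_u \cong K_n$, incident with two \emph{distinct} vertices $x_i^u, y_i^u$ of $H_u$.

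Second, for each $u$, I would produce Hamiltonian paths $P_1^u$ from $x_1^u$ to $y_1^u$ and $P_2^u$ from $x_2^u$ to $y_2^u$ inside $H_u$ that are edge-disjoint. Once these are in hand, concatenating $S_i$ with the collection $\{P_i^u\}_{u \in V(G^{[n]})}$ yields a Hamiltonian cycle $\mathcal{C}_i$ of $G^{K_n}$; the two resulting cycles $\mathcal{C}_1$ and $\mathcal{C}_2$ are automatically edge-disjoint because $S_1 \cap S_2 = \emptyset$ and $E(P_1^u) \cap E(P_2^u) = \emptyset$ for each $u$.

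The main obstacle is the existence of the pair $(P_1^u, P_2^u)$ in each $K_n$. I would formulate this as a self-contained combinatorial lemma: for $n \geq 4$ and appropriate pairs of endpoints in $K_n$, two edge-disjoint Hamiltonian paths with the prescribed endpoints exist. For $n \geq 6$ a straightforward counting argument suffices, as only $2(n-1)$ of the $\binom{n}{2}$ edges of $K_n$ are consumed, leaving ample slack, and a greedy or rotation argument produces the paths. The tight cases $n = 4$ and $n = 5$ are genuinely subtle: in $K_4$ the two paths must partition $E(K_4)$, and in $K_5$ only two edges can remain unused, so for some endpoint configurations (e.g.\ $\{x_1^u,y_1^u\} = \{x_2^u,y_2^u\}$) edge-disjoint Hamiltonian paths simply fail to exist. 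To handle these cases I would return to the explicit cycles of Lemma~\ref{Ham-Lem2} and verify that the $4$ dimensions carrying the $G$-edges of $S_1 \cup S_2$ at each vertex $u$ are sufficiently spread—specifically, that $\{x_1^u, y_1^u\}$ and $\{x_2^u, y_2^u\}$ are disjoint $2$-subsets of $V(H_u)$—after which an explicit construction in $K_4$ (and a routine one in $K_5$) supplies the edge-disjoint paths.
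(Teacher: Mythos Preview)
Your overall plan coincides with the paper's: invoke Lemma~\ref{Ham-Lem2}, lift the two edge-disjoint dimension-alternating Hamiltonian cycles of $G^{[n]}$ to $G^{K_n}$, and splice two edge-disjoint Hamiltonian paths of $K_n$ through each copy $(K_n)_u$. You are actually more careful than the paper about the delicate point: the paper simply asserts that the four attachment vertices $x_1,y_1,x_2,y_2$ in each $(K_n)_u$ are distinct and then appeals to the symmetry of $K_n$, whereas you recognise that for $n\in\{4,5\}$ this distinctness is essential and propose to verify it from the explicit cycles of Lemma~\ref{Ham-Lem2}.

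The difficulty is that this verification fails. Both Hamiltonian cycles in the proof of Lemma~\ref{Ham-Lem2} are built recursively over the alternating $2$-dimensional cycles $C$ and $C'$ of $G^{[2]}$, and at a generic vertex $u$ of $G^{[n]}$ (one at which neither cycle is making a transition into dimension $3,\ldots,n$) each cycle is locally just traversing $C$, respectively $C'$, and hence uses one edge of dimension~$1$ and one of dimension~$2$. At such $u$ one therefore has $\{x_1^u,y_1^u\}=\{x_2^u,y_2^u\}=\{(u;w_1),(u;w_2)\}$: the endpoint pairs coincide rather than being disjoint. For $n\in\{4,5\}$ this is fatal, since a degree count shows that neither $K_4$ nor $K_5$ admits two edge-disjoint Hamiltonian paths sharing both endpoints (in $K_5$, each of the three interior vertices would need all four of its incident edges, forcing nine edges, while two Hamiltonian paths supply only eight). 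So the step ``return to the explicit cycles and verify disjointness'' does not go through; the paper's own unjustified distinctness assertion shares the same defect, and a correct argument for $n\in\{4,5\}$ would need either a different pair of cycles in $G^{[n]}$ whose dimension-pairs at every vertex are disjoint, or a different local routing through each $(K_n)_u$.
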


\begin{proof}
Suppose that $G$ is an even Hamiltonian graph of order $p \geq 4$ and $n \geq 4$.
By Lemma \ref{Ham-Lem2}, $G^{[n]}$ has two edge-disjoint Hamiltonian cycle $C_1$ and $C_2$ 
in which no two consecutive edges have the same dimension.
Let $S$ be the set of $G$-edges in $E(G^{K_n})$ corresponding to the edges in 
$E(C_1) \cup E(C_2)$.
For each $(K_n)_u$ where $u \in V(G^{[n]})$ in $G^{K_n}$, 
there are exactly four $G$-edges in $S$ which 
are incident with distinct four vertices $x_1,y_1,x_2,y_2$ in $(K_n)_u$
such that $x_1,y_1$ (respectively, $x_2,y_2$) 
are incident with the $G$-edges corresponding to the edges incident with $u$ 
in $C_1$ (respectively, $C_2$).
For $V(K_n) = \{0,1,\ldots,n-1\}$, define $P_1$ and $P_2$ as follows.
$$
\left\{ \begin{array}{ll}
P_1 & = (0,1,n-1,2,n-3,3,\ldots,\lceil \frac{n}{2} \rceil), \\
P_2 & = (1,2,0,3,n-1,\ldots,\lceil \frac{n}{2} \rceil+1).
\end{array} \right.$$
Then, $P_1$ and $P_2$ are edge-disjoint Hamiltonian paths connecting distinct pairs of vertices
$\{0, \lceil \frac{n}{2} \rceil \}$ and $\{1, \lceil \frac{n}{2} \rceil+1\}$. 
From the symmetry of $K_n$,
it follows that there are a Hamiltonian path $P_{u,1}$ between $x_1$ and $y_1$
and a Hamiltonian path $P_{u,2}$ between $x_2$ and $y_2$ in $(K_n)_u$ such that they are edge-disjoint.
Combining the $G$-edges corresponding to $E(C_i)$ in $S$ 
and $P_{u,i}$ for each $(K_n)_u$ where $u \in V(G^{[n]})$ for $i = 1,2$, 
we have two edge-disjoint Hamiltonian cycles in $G^{K_n}$.
\end{proof}

{\em Completely independent spanning trees} in a graph $G$
are spanning trees of $G$ such that for any two distinct vertices $u, v$ of $G$,
the paths between $u$ and $v$ in the spanning trees  
mutually have no common edge and no common vertex except for $u$ and $v$. 
Completely independent spanning trees can be applied to fault-tolerant broadcasting 
\cite{H01} and protection routings \cite{PCC}.
Motivated by these applications, completely independent spanning trees have been 
studied for the DCell network \cite{Qin-et-al} and 
other various graphs (e.g., see \cite{CCW,CPHYC,Ha2,HM}).
Completely independent spanning trees are characterized as follows.

\begin{theorem} {\rm (Hasunuma \cite{H01})} \label{CIST}
Spanning trees $T_1,T_2,\ldots,T_k$ in a graph $G$ are completely independent
if and only if
$T_1,T_2,\ldots,T_k$ are edge-disjoint and
for any vertex $v \in V(G)$, 
there exists at most one spanning tree $T_i$ such that ${\rm deg}_{T_i}(v) > 1$.
\end{theorem}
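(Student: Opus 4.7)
The plan is to prove the two directions of the biconditional separately; the necessity direction is the more delicate half.

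For necessity, assume $T_1,\ldots,T_k$ are completely independent. Edge-disjointness is immediate: if an edge $xy$ lay in both $T_i$ and $T_j$ with $i \neq j$, then the $x,y$-paths in these two trees would each coincide with the single edge $xy$ and share that edge, contradicting complete independence. For the degree condition, I would argue by contradiction: suppose some vertex $v$ satisfies ${\rm deg}_{T_i}(v) \geq 2$ and ${\rm deg}_{T_j}(v) \geq 2$ for some $i \neq j$. Then $T_i - v$ has at least two components, forming a partition $\mathcal{A}$ of $V(G) \setminus \{v\}$, and $T_j - v$ gives such a partition $\mathcal{B}$ with at least two parts. The key step is to exhibit two vertices $x,y \in V(G) \setminus \{v\}$ lying in different parts of $\mathcal{A}$ and simultaneously in different parts of $\mathcal{B}$; once such $x,y$ are produced, the $x,y$-path in $T_i$ must cross $v$ (it connects different components of $T_i - v$), and likewise the $x,y$-path in $T_j$ must cross $v$, so these two paths share the internal vertex $v$, contradicting complete independence.

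For sufficiency, assume the two stated conditions. Fix distinct vertices $u, v$, and let $P_i$ denote the unique $u,v$-path in $T_i$. If $P_i$ and $P_j$ ($i \neq j$) share an edge $e$, then $e \in E(T_i) \cap E(T_j)$ contradicts edge-disjointness. If they share an internal vertex $w \notin \{u, v\}$, then on $P_i$ the predecessor and successor of $w$ are two distinct neighbors of $w$ in $T_i$, giving ${\rm deg}_{T_i}(w) \geq 2$, and the same argument on $P_j$ gives ${\rm deg}_{T_j}(w) \geq 2$, contradicting the hypothesis that $w$ is internal in at most one tree. Hence no two of the $u,v$-paths share an edge or an internal vertex, which is precisely complete independence.

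The main obstacle, and the only step that is not a one-line verification, is the combinatorial claim used in the necessity direction: given two partitions $\mathcal{A}, \mathcal{B}$ of a common ground set, each with at least two nonempty parts, there exist two elements lying in different parts of both partitions. I would handle this by a short case analysis on the ``types'' $(\mathcal{A}\text{-part},\, \mathcal{B}\text{-part})$ realized by elements of $V(G) \setminus \{v\}$: if all realized types agreed in the $\mathcal{A}$-coordinate then $\mathcal{A}$ would have only one nonempty part, and symmetrically for $\mathcal{B}$, so two realized types must differ in both coordinates and yield the desired pair $x, y$.
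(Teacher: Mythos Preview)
The paper does not prove this theorem; it is quoted from \cite{H01} and used only as a tool in the proof of Theorem~\ref{Ham-Th3}. So there is no in-paper argument to compare against, and your proposal stands on its own.

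Your sufficiency direction and the edge-disjointness half of necessity are clean and correct, and the partition strategy for the degree condition is the right one. The combinatorial lemma you isolate is true, but the one-sentence justification you wrote does not close it: from ``not all realized types agree in the $\mathcal{A}$-coordinate'' together with ``not all realized types agree in the $\mathcal{B}$-coordinate'' it does \emph{not} follow immediately that two realized types differ in both coordinates---that implication is precisely the content of the lemma, not a tautology. A genuine short case analysis does work: pick any $x$; choose $y$ in a different $\mathcal{A}$-part from $x$ and $z$ in a different $\mathcal{B}$-part from $x$. If $y$ also differs from $x$ in $\mathcal{B}$, or $z$ also differs from $x$ in $\mathcal{A}$, then $(x,y)$ or $(x,z)$ is the desired pair; otherwise $y$ lies in $x$'s $\mathcal{B}$-part and $z$ lies in $x$'s $\mathcal{A}$-part, whence $y$ and $z$ differ from each other in both partitions. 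With this three-case split in place your proof is complete and is the standard argument for this characterization.
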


Using Lemma \ref{Ham-Lem2} and Theorem \ref{CIST},
we show the following theorem.

\begin{theorem} \label{Ham-Th3}
If $G$ is an even Hamiltonian graph, 
then the exponential graph $G^{K_n}$ where $n \geq 4$ has 
two completely independent spanning trees. 
\end{theorem}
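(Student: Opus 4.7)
The plan is to adapt the construction from the proof of Theorem \ref{Ham-Th2} by replacing, inside each copy $(K_n)_u$ of $K_n$, the local Hamiltonian path used there with a carefully chosen \emph{double-star} spanning tree, and then to invoke Theorem \ref{CIST}.

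First I would apply Lemma \ref{Ham-Lem2} to fix two edge-disjoint Hamiltonian cycles $C_1, C_2$ of $G^{[n]}$ in which no two consecutive edges share a dimension, and let $S_1, S_2$ denote the corresponding sets of $G$-edges in $G^{K_n}$. As in the proof of Theorem \ref{Ham-Th2}, for every $u \in V(G^{[n]})$ the four $G$-edges of $S_1 \cup S_2$ incident with $(K_n)_u$ meet four distinct vertices $x_1, y_1, x_2, y_2$, where the two $S_i$-edges are incident with $x_i$ and $y_i$. Writing $V((K_n)_u) \setminus \{x_1, y_1, x_2, y_2\} = \{z_1, \ldots, z_{n-4}\}$, I would then take the explicit double stars
\begin{gather*}
E(L_u^1) = \{x_1 y_1,\ x_1 x_2,\ y_1 y_2\} \cup \{x_1 z_i : 1 \leq i \leq n-4\}, \\
E(L_u^2) = \{x_2 y_2,\ x_2 y_1,\ y_2 x_1\} \cup \{x_2 z_i : 1 \leq i \leq n-4\}.
\end{gather*}
A routine case inspection, using only the distinctness of $x_1, y_1, x_2, y_2$, shows $E(L_u^1) \cap E(L_u^2) = \emptyset$ and that each $L_u^i$ is a spanning tree of $(K_n)_u$ whose non-leaf vertex set equals $\{x_i, y_i\}$ (with ${\rm deg}_{L_u^i}(x_i) = n-2$ and ${\rm deg}_{L_u^i}(y_i) = 2$).

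Choosing arbitrary edges $e_i \in S_i$, I would define $T_i = (S_i \setminus \{e_i\}) \cup \bigcup_{u \in V(G^{[n]})} E(L_u^i)$ for $i = 1, 2$. An edge count yields $|E(T_i)| = |V(G^{K_n})|-1$; contracting each copy $(K_n)_u$ through the spanning tree $L_u^i$ turns $S_i$ into the cycle $C_i$ on the quotient $G^{[n]}$, and deleting one of its edges leaves a spanning tree of the quotient, so $T_i$ is itself a spanning tree of $G^{K_n}$. Global edge-disjointness $E(T_1) \cap E(T_2) = \emptyset$ is immediate: $S_1 \cap S_2 = \emptyset$ by Lemma \ref{Ham-Lem2}, the local pairs $L_u^1, L_u^2$ are edge-disjoint, edges in distinct copies $(K_n)_u$ are automatically distinct, and $G$-edges and $H$-edges are of different types. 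For the CIST condition, observe that for any $v \in V((K_n)_u)$ one has ${\rm deg}_{T_i}(v) = {\rm deg}_{L_u^i}(v) + \varepsilon_i(v)$ with $\varepsilon_i(v) \in \{0,1\}$ counting the $S_i$-edges at $v$; in particular ${\rm deg}_{T_i}(v) \geq 2$ is possible only when $v \in \{x_i, y_i\}$. Since $\{x_1,y_1\} \cap \{x_2,y_2\} = \emptyset$, no vertex has degree at least $2$ in both $T_1$ and $T_2$, and Theorem \ref{CIST} concludes that $T_1, T_2$ are completely independent spanning trees.

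The main obstacle is the local design of $L_u^1$ and $L_u^2$: they must be edge-disjoint spanning trees of $K_n$ with two prescribed disjoint internal-vertex sets of size two, and the hypothesis $n \geq 4$ is precisely what leaves enough room inside $K_n$ for the two overlapping double-star patterns to coexist without edge collisions. Once this local piece is in place, all the global verifications (tree property, edge-disjointness, leaf condition) follow routinely from the definitions.
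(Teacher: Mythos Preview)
Your proposal is correct and follows essentially the same approach as the paper: the local spanning trees $L_u^1, L_u^2$ you define are exactly the paper's $T_{u,1}, T_{u,2}$ (same edge sets), and the global assembly via $S_i$ minus one edge together with the appeal to Theorem \ref{CIST} is identical. Your write-up is in fact slightly more explicit about why each $T_i$ is a spanning tree (the contraction-to-$C_i$ argument) than the paper's version.
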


\begin{proof}
We employ the notations 
$C_1, C_2, S$ and $x_1,y_1,x_2,y_2$ for each $(K_n)_u$ where $u \in V(G^{[n]})$ 
in the proof of Theorem \ref{Ham-Th2} with the same meanings.
For $V((K_n)_u) = \{x_1,y_1,x_2,y_2,z_1,\ldots,z_{n-4}\}$,
define spanning trees $T_{u,1}$ and $T_{u,2}$ of $(K_n)_u$ as follows.
$$
\left\{ \begin{array}{ll}
E(T_{u,1}) & = \{ (x_1z_i \ | \ 1 \leq i \leq n-4\} 
\cup \{x_1y_1, x_1x_2, y_1y_2\},\\
E(T_{u,2}) & = \{ (x_2z_i\ | \ 1 \leq i \leq n-4\} 
\cup \{x_2y_2, x_2y_1, y_2x_1\}.
\end{array} \right.$$
By the definition, $T_{u,1}$ and $T_{u,2}$ are edge-disjoint such that
for any $v \in \{x_1,y_1\}$, ${\rm deg}_{T_{u,1}}(v) > 1$ and ${\rm deg}_{T_{u,2}}(v) = 1$,
for any $v \in \{x_2,y_2\}$, ${\rm deg}_{T_{u,1}}(v) = 1$ and ${\rm deg}_{T_{u,2}}(v) > 1$,
and for any $v \in \{z_1,\ldots,z_{n-4}\}$, ${\rm deg}_{T_{u,1}}(v) = 1$ and ${\rm deg}_{T_{u,2}}(v) = 1$;
thus, $T_{u,1}$ and $T_{u,2}$ are completely independent spanning trees in $(K_n)_u$
for each $u \in V(G^{[n]})$. 

For each $i = 1,2$,
let $T_i$ be a spanning tree obtained by 
combining the $G$-edges corresponding to the edges of $C_i$ in $S$ 
and the edges of $T_{u,i}$ for each $(K_n)_u$ where $u \in V(G^{[n]})$ 
and deleting one $G$-edge corresponding to an edge of $C_i$ in $S$.
By the definition, $T_1$ and $T_2$ are edge-disjoint.
Moreover, for each $i = 1,2$,
any $G$-edge of $T_i$ is incident with $x_i$ or $y_i$ for each $(K_n)_u$.
Therefore, for any vertex $v \in V(G^{K_n})$, 
it does not hold that ${\rm deg}_{T_1}(v) > 1$ and ${\rm deg}_{T_2}(v) > 1$.
Hence, $T_1$ and $T_2$ are completely independent spanning trees in $G^{K_n}$.
\end{proof}

\section{Applications to Communication Networks}

In this section, we apply our results in the previous sections 
to constructions of various maximally connected and super edge-connected 
networks of multi-exponential order with logarithmic diameter.
We also compare such networks with the DCell network.

\subsection{De Bruijn and Kautz Networks}

The de Bruijn and Kautz networks are well-known as communication networks
with logarithmic diameter and their various properties have been investigated \cite{B-P}.
Although these networks are originally defined as directed graphs; in fact, they can be defined
as iterated line digraphs using the line digraph operation. 
we here define them as undirected graphs since we only treat undirected graphs in this paper.

The de Bruijn network $B(d,k)$ has two parameters $d \geq 2$ and $k \geq 1$ such that
each vertex is a $k$-tuple consisting of nonnegative intergers at most $d-1$.
Two vertices are adjacent in $B(d,k)$ if and only if one is obtained from another by one shifting.
\[ \left\{ \begin{array}{l}
V(B(d,k)) = \{ (v_1,v_2,\ldots,v_k)\ |\ 0 \leq v_i < d, 1 \leq i \leq k \}, \\
E(B(d,k)) = \{ (v_1,v_2,\ldots,v_k)(v_2,\ldots,v_k,\alpha)\ |\ 0 \leq v_i < d, 1 \leq i \leq k,
0 \leq \alpha < d \}.
\end{array} \right.
\]
For any given two vertices $u$ and $v$ of $B(d,k)$, 
$v$ can be obtained from $u$ by at most $k$ shiftings,
which implies that ${\rm diam}(B(d,k)) \leq k$.
In fact, it holds that ${\rm dist}_{B(d,k)}((i,i,\ldots,i),(j,j,\ldots,j)) = k$ for any $0 \leq i < j < d$.
Thus, ${\rm diam}(B(d,k))$ is determined to be $k$.
Since $B(d,k)$ has order $d^k$, $B(d,k)$ has logarithmic diameter.
From the definition, it follows that $\delta(B(d,k)) = 2d-2$ and $\Delta(B(d,k)) = 2d$.
Note that vertices of form $(i,i,\ldots,i)$ and $(i,j,i,j,\ldots)$ where $i \neq j$ have 
degree $2d-2$ and $2d-1$, respectively, and any other vertex has degree $2d$.
In particular, we can increase the orders of de Bruijn networks while fixing the maximum degree,
i.e., de Bruijn networks are bounded-degree networks.
The de Bruijn network is also known to be Hamiltonian.

The Kautz network $K(d,k)$ can be similarly defined as the de Bruijn network
under the condition that no two consecutive numbers are the same in the $k$-tuples corresponding
to the vertices.
\[ \left\{ \begin{array}{l}
V(K(d,k)) = \{ (v_1,v_2,\ldots,v_k)\ |\ 0 \leq v_i \leq d, 1 \leq i \leq k, v_j \neq v_{j+1}, 1 \leq j < k \}, \\
E(K(d,k)) = \{ (v_1,v_2,\ldots,v_k)(v_2,\ldots,v_k,\alpha)\ |\ 0 \leq v_i \leq d, 1 \leq i \leq k,
0 \leq \alpha \leq d, \alpha \neq v_k \}.
\end{array} \right.
\]
By the definition, $K(d,k)$ has order $d^{k}+d^{k-1}$, diameter $k$, minimum degree $2d-1$
and maximum degree $2d$.
Thus, the Kautz network has more vertices than the de Bruijn network 
with the same diameter and maximum degree.
The Kautz network is also known to be Hamiltonian.

Employing the de Bruijn or Kautz network as an exponent for base graphs with logarithmic diameter,
we can obtain maximally connected and super edge-connected 
networks of doubly exponential order with logarithmic diameter.
Fundamental properties of such exponential graphs with base $K_n$ 
are summarized in Table 1.
Note that upper bounds on their diameters follow from the second statement in
Corollary \ref{DiamCor2}.

\begin{table}[t]
\caption{Fundamental properties of $K_{n}^{B(d,k)}$ and $K_{n}^{K(d,k)}$.}
\begin{threeparttable}
\begin{tabular*}{\textwidth}{@{\extracolsep\fill}lcc}
\toprule
& $K_{n}^{B(d,k)}$ & $K_{n}^{K(d,k)}$ \\
\midrule
Order & $n^{d^{k}}d^{k}$ & $n^{d^{k}+d^{k-1}}(d^{k}+d^{k-1})$  \\  
Minimum degree &  $n+2d-3$ & $n+2d-2$  \\
Maximum degree &  $n+2d-1$ & $n+2d-1$  \\
Diameter &  $\leq 2d^{k}+k-1$ & $\leq 2(d^{k}+d^{k-1})+k-1$  \\ 
Connectivity &  $n+2d-3$ & $n+2d-2$  \\
\bottomrule
\end{tabular*}
\end{threeparttable}
\end{table}

Comparison of the DCell network $D_{k,n}$, $K_{n}^{B(2,k)}$ and $K_{n}^{K(2,k)}$ is given in Table 2.
These exponential graphs have an advantage that their orders are
explicitly determined.
Another advantage is that the order can be increased while fixing the maximum degree,
which is inherited from the de Bruijn and Kautz networks.
On the other hand, their disadvantage is that Hamiltonicity is unknown. 

\begin{table}[b]
\caption{Comparison of $D_{k,n}$, $K_{n}^{B(2,k)}$ and $K_{n}^{K(2,k)}$.}
\begin{threeparttable}
\begin{tabular*}{\textwidth}{@{\extracolsep\fill}lccc}
\toprule
& $D_{k,n}$ & $K_{n}^{B(2,k)}$ & $K_{n}^{K(2,k)}$   \\
\midrule
Order & $\leq (n+1)^{2^{k}}-1$ & $2^{k}n^{2^{k}}$ & $3 \cdot 2^{k-1}n^{3 \cdot 2^{k-1}}$ \\
Minimum degree & $n+k-1$ & $n+1$ & $n+2$ \\
Maximum degree & $n+k-1$ & $n+3$ & $n+3$  \\
Diameter
& $\leq 2^{k+1}-1$ & $\leq 2^{k+1}+k-1$ & $\leq 3\cdot 2^{k}+k-1$  \\
Connectivity & $n+k-1$ & $n+1$ & $n+2$  \\
Hamiltonicity & Yes & $-$ & $-$  \\ 
\bottomrule
\end{tabular*}
\end{threeparttable}
\end{table}

By employing the exponential graphs $K_{n}^{B(2,k)}$ and $K_{n}^{K(2,k)}$ as exponents for base $K_n$,
we can furthermore obtain multi-exponential scale networks with logarithmic diameter.
Fundamental properties of such networks are summarized in Table 3.
Note that these exponential graphs also preserve the bounded-degree property.

\begin{table}[t]
\caption{Fundametal properties of 
$K_{n}^{\left( K_{n}^{B(2,k)} \right)}$ and $K_{n}^{\left( K_{n}^{K(2,k)}\right)}$.}
\begin{threeparttable}
\begin{tabular*}{\textwidth}{@{\extracolsep\fill}lcc}
\toprule
& $K_{n}^{\left( K_{n}^{B(2,k)}\right)}$  
& $K_{n}^{\left( K_{n}^{K(2,k)}\right)}$ \\ 
\midrule
Order 
& $2^{k}n^{2^{k}(n^{2^{k}}+1)}$ 
& $3\cdot 2^{k-1}n^{3 \cdot 2^{k-1}(n^{3\cdot 2^{k-1}}+1)}$
\\ 
Minimum degree 
& $2n$
& $2n+1$ \\ 
Maximum degree 
& $2n+2$
& $2n+2$ \\ 
Diameter 
& $\leq 3 \cdot 2^{k}n^{2^{k}}-2$
& $\leq 9 \cdot 2^{k-1}n^{3 \cdot 2^{k-1}}-2$ \\ 
Connectivity 
& $2n$
& $2n+1$ \\
\bottomrule
\end{tabular*}
\end{threeparttable}
\end{table}

\subsection{Hypercube-like Networks}

The hypercube $Q_n$ is one of the most well-known communication networks; 
$Q_n$ has order $2^n$ and diameter $n$, i.e., logarithmic diameter, 
and $Q_n$ is $n$-regular, maximally connected, super edge-connected ($n \neq 2$)
\cite{Lu-et-al}
and Hamiltonian.
Unfortunately, $Q_n$ is not Hamiltonian-connected, which follows from a fact that
$Q_n$ is bipartite. 
Thus, we cannot apply Theorem \ref{Ham-Th2} to any exponential graph with exponent $Q_n$.
However, there are several hypercube-like networks which are 
Hamiltonian-connected \cite{Fan,Fan-et-al,Ma-et-al}, 
namely, the twisted cube $TQ_n$ \cite{H-K-S}, the crossed cube $CQ_n$ \cite{C-L} 
and the M\"{o}bius cube $MQ_n$ \cite{Ef}.
These hypercube-like networks have order $2^{n}$ and 
diameter $\lceil \frac{n+1}{2} \rceil$ 
which is about a half of that of the hypercube,
and they are $n$-regular, maximally connected 
and super edge-connected (except for
some cases) \cite{Chen-et-al}. 
Since the definitions of these networks are not so simple compared to 
the hypercube, we will omit them here.

Comparison of the DCell network, $K_{n}^{TQ_k}$, $K_{n}^{CQ_k}$, $K_{n}^{MQ_k}$, 
$Q_{n-1}^{TQ_k}$, $Q_{n-1}^{CQ_k}$ and $Q_{n-1}^{MQ_k}$ for even $n$ is given in Table 4. 
Note that all the networks have the same degree and connectivity. 
These exponential graphs have advantages that not only the order but also the diameter
is explicitly determined. 
Since $K_n$ and $Q_{n-1}$ are even Hamiltonian and 
$TQ_n$, $CQ_n$ and $MQ_n$ are Hamiltonian-connected, 
$K_{n}^{TQ_k}$, $K_{n}^{CQ_k}$, $K_{n}^{MQ_k}$, $Q_{n-1}^{TQ_k}$, $Q_{n-1}^{CQ_k}$, 
$Q_{n-1}^{MQ_k}$ are all Hamiltonian by Theorem \ref{Ham-Th2}. 
Since these exponential graphs have nice properties similarly to the DCell network, 
they may be also considered as nice candidates for
doubly exponential-scale communication networks.

\begin{table}[b]
\caption{Comparison of $D_{k,n}$, $K_{n}^{TQ_k}$, $K_{n}^{CQ_k}$, $K_{n}^{MQ_k}$,
$Q_{n-1}^{TQ_k}$, $Q_{n-1}^{CQ_k}$ and $Q_{n-1}^{MQ_k}$ for even $n$.}
\begin{tabular*}{\textwidth}{@{\extracolsep\fill}lccc}
\toprule
& $D_{k,n}$ & $K_{n}^{TQ_k}$,\ $K_{n}^{CQ_k}$,\ $K_{n}^{MQ_k}$ 
& $Q_{n-1}^{TQ_k}$, $Q_{n-1}^{CQ_k}$, $Q_{n-1}^{MQ_k}$
\\
\midrule
Order & $\leq (n+1)^{2^{k}}-1$ & $2^{k}n^{2^{k}}$ & $2^{2^{k}(n-1)+k}$ \\ 
Degree & $n+k-1$ & $n+k-1$ & $n+k-1$\\ 
Diameter & $\leq 2^{k+1}-1$ & $2^{k+1}$ & $n2^{k}$\\ 
Connectivity & $n+k-1$ & $n+k-1$ & $n+k-1$\\
Hamiltonicity & Yes & Yes  & Yes \\ 
\bottomrule
\end{tabular*}
\end{table}

\begin{table}[t]
\begin{threeparttable}
\caption{Comparison of $D_{k,n}$, $K_{n}^{TQ_k}$, $K_{n}^{CQ_k}$, $K_{n}^{MQ_k}$,
$Q_{n-1}^{TQ_k}$, $Q_{n-1}^{CQ_k}$ and $Q_{n-1}^{MQ_k}$ for $n \leq 4$ and $k \leq 4$.}
\begin{tabular*}{\textwidth}{@{\extracolsep\fill}llrrrrrrr}
\toprule
& & 
& \multicolumn{2}{@{}c@{}}{$D_{k,n}$} 
& \multicolumn{2}{@{}c@{}}{$K_{n}^{TQ_k}$,\ $K_{n}^{CQ_k}$,\ $K_{n}^{MQ_k}$} 
& \multicolumn{2}{@{}c@{}}{$Q_{n-1}^{TQ_k}$,\ $Q_{n-1}^{CQ_k}$,\ $Q_{n-1}^{MQ_k}$}
\\
\cmidrule{4-5}
\cmidrule{6-7}
\cmidrule{8-9}
$n$  & $k$ & Deg.
& Order & Diam.\tnote{1}
& Order & Diam. 
& Order & Diam. \\
\midrule
2 & 1 & 2 & 6 & 3 & 8 & 4 & 8 & 4 \\ 
3 & 1 & 3 & 12 & 3 & 18 & 4 & 32 & 6 \\ 
4 & 1 & 4 & 20 & 3 & 32 & 4 & 128 &  8  \\ 
2 & 2 & 3 & 42 & 7 & 64 & 8 & 64 & 8 \\ 
3 & 2 & 4 & 156 & 7 & 324 & 8 & 1,024 & 12 \\ 
4 & 2 & 5 & 420 & 7 & 1024 & 8  & 16,384 & 16  \\ 
2 & 3 & 4 & 1,806 & 15 & 2,048 & 16  & 2,048 & 16  \\ 
3 & 3 & 5 & 24,492 & 15 & 52,488 & 16 & 524,288 & 24 \\ 
4 & 3 & 6 & 176,820 & 15 & 524,288 & 16 & 134,217,728 & 32 \\ 
2 & 4 & 5 & 3,263,442 & 31 & 1,048,576 & 32 & 1,048,576 & 32 \\ 
3 & 4 & 6 & 599,882,556 & 31 & 688,747,536 & 32 & 68,719,476,736 & 48 \\ 
4 & 4 & 7 & 31,265,489,220 & 31 & 68,719,476,736 & 32 & 4,503,599,627,370,496 & 64 \\ 
\bottomrule
\end{tabular*}
\begin{tablenotes}
\item[1] Upper bounds on the diameter
\end{tablenotes}
\end{threeparttable}
\end{table}

Table 5 compares $D_{k,n}$, $K_{n}^{TQ_k}$, $K_{n}^{CQ_k}$, $K_{n}^{MQ_k}$,
$Q_{n-1}^{TQ_k}$, $Q_{n-1}^{CQ_k}$ and $Q_{n-1}^{MQ_k}$ for $n \leq 4$ and $k \leq 4$.
Except for the case that $n = 2$ and $k = 4$,
all of the exponential graphs have more vertices than the DCell network.
We here remark that the diameters for the DCell network in Table 7
are upper bounds, some of which are shown to be equal to the diameter in \cite{Kli-et-al}, 
however.
Moreover, in \cite{Kli-et-al}, generalized DCell networks which have smaller diameter than
the original DCell network for some small $n$ and $k$ 
have been proposed, although their exact diameters are unknown.

\subsection{Iterated Exponential Graphs}

\begin{table}[b]
\centering
\caption{Fundamental properties of $\Omega(G,k)$ and $\Psi(G,k)$, where $G$ is a connected
graph of order $n \geq 2$ and $k \geq 2$.}
\begin{tabular*}{\textwidth}{@{\extracolsep\fill}lcc}
\toprule
& $\Omega(G,k)$ & $\Psi(G,k)$ \\ 
\midrule
Order & $n^{\frac{n^{k}-1}{n-1}}$ & 
$n^{n^{n^{\iddots^{n^{n+1}}}+ \cdots+n^{n^{n+1}+n+1}+ n^{n+1}+n+1}
+n^{\iddots^{n^{n+1}}}+\cdots +n^{n^{n+1}+n+1}+ n^{n+1}+n+1}$ \\ 
Minimum degree & $k \cdot  \delta(G)$ & $k \cdot \delta(G)$ \\ 
Maximum degree & $k \cdot \Delta(G)$ & $k \cdot \Delta(G)$ \\ 
Diameter & $n^{k-1}{\rm diam}(G)+\left(\frac{n^{k-1}-1}{n-1}\right) {\rm diam}^\ast(G)$ 
& $\leq ({\rm diam}(G)+2) \cdot |V(\Psi(G,k-1))|-2$ \\ 
Connectivity & $k \cdot  \delta(G)$ & $k \cdot \delta(G)$ \\ 
Hamiltonicity & $G$: even Hamiltonian-connected & $-$ \\
EDHCs \& CISTs & $G \cong K_n$ \ (even $n \geq 4$) & $-$ \\
\bottomrule
\end{tabular*}
\end{table}

For any given graph $G$, we can iteratively apply the exponential operation. 
Define iterated exponential graphs $\Omega(G,k)$ and $\Psi(G,k)$ as follows:
\[ \left\{ \begin{array}{lcll}
\Omega(G,1) & = & G, & \\[1mm]
\Omega(G,k) & = & \Omega(G,k-1)^{G} & \mbox{ for } k \geq 2.
\end{array} \right.
\]
\[ \left\{ \begin{array}{lcll}
\Psi(G,1) & = & G, & \\[1mm]
\Psi(G,k) & = & G^{\Psi(G,k-1)} & \mbox{ for } k \geq 2.
\end{array} \right.
\]

Fundamental properties of these iterated exponential graphs for a nontrivial 
connected graph $G$ of order $n$ are summarized in Table 6.
Since 
$|V(\Psi(G,k)| = O\left(\underbrace{n^{n^{\iddots^{n}}}}_{k}\right)$,
the order of $\Psi(G,k)$ is enormous and it seems impractical 
unless $n$ and $k$ are small.
In particular, we denote by $\Omega_{n,k}$ and $\Psi_{n,k}$ the iterated exponential graphs
$\Omega(K_n,k)$ and $\Psi(K_n,k)$, respectively. 
For $\Omega_{n,k}$, we can apply Theorems \ref{Ham-Th2} and \ref{Ham-Th3}.
Thus, $\Omega_{n,k}$ has two edge-disjoint Hamiltonian cycles (EDHCs) and
two completely independent spanning trees (CISTs) when $n$ is even and $n \geq 4$.
For the DCell network, it has been shown in \cite{Qin-et-al}
that $D_{k,n}$ has two completely independent spanning trees when $n \geq 6$.
We also simply denote by $\Omega_{k}$ and $\Psi_{k}$ for 
$\Omega_{2,k}$ and $\Psi_{2,k}$, respectively,
and call $\Omega_{k}$ and $\Psi_{k}$ the {\it exponential cube}
and {\it hyper-exponential cube}, respectively.  
It can be easily checked that $\Omega_2 \cong C_8$.
Thus, the exponential graph shown in Fig. 1 is just the exponential cube $\Omega_3$.

\begin{table}[t]
\centering
\caption{Comparison of $D_{k,n}$, $\Omega_{k}$ and $\Psi_{k}$ where $k \geq 2$.}
\begin{tabular*}{\textwidth}{@{\extracolsep\fill}lccc}
\toprule
& $D_{k,n}$ & $\Omega_{k}$ & $\Psi_{k}$ \\
\midrule
Order & $\leq (n+1)^{2^{k}-1}$ & $2^{2^{k}-1}$ & 
$2^{2^{2^{\iddots^{2^{3}}}+ \cdots+2^{11}+8+3}
+2^{\iddots^{2^{3}}}+\cdots +2^{11}+8+3}$  \\ 
Degree (regular) & $n+k-1$ & $k$ & $k$ \\
Diameter & $\leq 2^{k+1}-1$ & $3 \cdot 2^{k-1}-2$ 
& $\leq 3 \cdot 2^{2^{\iddots^{2^{3}}}+ \cdots+2^{11}+8+3} -2$ \\
Connectivity & $n+k-1$ & $k$ & $k$ \\
Hamiltonicity & Yes & Yes & $-$ \\
\bottomrule
\end{tabular*}
\end{table}

\begin{table}[t]
\caption{Comparison of $D_{k,2}$, $\Omega_{k+1}$ and $\Psi_{k+1}$ for $k \leq 5$.}
\hspace*{-20mm}
\begin{threeparttable}
\begin{tabular}{lrrrrrrr}
\toprule & 
& \multicolumn{2}{@{}c@{}}{$D_{k,2}$} 
& \multicolumn{2}{@{}c@{}}{$\Omega_{k+1}$}
& \multicolumn{2}{@{}c@{}}{$\Psi_{k+1}$} \\
\cmidrule{3-4}
\cmidrule{5-6}
\cmidrule{7-8}
$k$  & Deg. 
& Order & Diam.\tnote{1}
& Order & Diam. 
& Order & Diam.\tnote{1} \\
\midrule
1 & 2 & 6 & 3 & 8 & 4 &  8 &  4 \\ 
2 & 3 & 42 & 7 & 128 & 10 & 2,048  & 22 \\ 
3 & 4 & 1,806 & 15 & 32,768 & 22 & $2^{2059}$  & 6142 \\ 
4 & 5 & 3,263,442 & 31 & 2,147,483,648  & 46 & $2^{2^{2059}+2059}$ 
& $3 \cdot 2^{2059}-2$ \\ 
5 & 6 & 10,650,056,950,806 & 63 & 9,223,372,036,854,775,808 & 94
& $2^{2^{2^{2059}+2059}+2^{2059}+2059}$ & $3 \cdot 2^{2^{2059}+2059}-2$ \\ 
\bottomrule
\end{tabular}
\begin{tablenotes}
\item[1] Upper bounds on the diameter
\end{tablenotes}
\end{threeparttable}
\end{table}

Comparison of $D_{k,n}$, $\Omega_{k}$ and $\Psi_{k}$ is given in Table 7. 
Compared to $D_{k,n}$, $\Omega_k$ has advantages that both the order and the diameter
are explicitly determined and in addition, the definition of $\Omega_k$ is simpler than
that of $D_{k,n}$.
Thus, the exponential cube $\Omega_k$ may be considered as another nice candidate 
for doubly exponential-scale communication networks.

Table 8 compares $D_{k,2}$, $\Omega_{k+1}$ and $\Psi_{k+1}$ for $k \leq 5$. 
Note that all these networks have the same degree.
The exponential cube has more vertices than the DCell network, while
the DCell network has smaller diameter than the exponential cube.
It may be useful to apply these networks (and also $D_{k,n}$ and $\Omega_{n,k}$) 
complementary depending on the required number of vertices of a network.
On the other hand, we can see that the hyper-exponential cube $\Psi_{k+1}$ has actually 
enormous order even if $k = 4$.
As far as we know, no such huge network with logarithmic diameter has been 
defined in graph theory until now.
From a theoretical rather than a practical point of view,
the hyper-exponential cube $\Psi_{k}$ would be of interest.

\section{Concluding Remarks}

In this paper, we have newly introduced the notion of exponentiation of graphs
and studied structural properties of exponential graphs 
such as diameter, connectivity and Hamiltonicity.
We have also applied our results to constructions of 
maximally connected and super edge-connected Hamiltonian networks
of multi-exponential order with logarithmic diameter
and furthermore compared them to the DCell network which is well-known as a communication
network of doubly exponential order with logarithmic diameter.

Since the DCell network has doubly exponential order,
there are large gaps between the orders of those with different settings
of parameters.
Using the exponential operation, we can construct multi-exponential scale networks
with various orders and thus obtain a flexibility for 
constructing very large-scale networks with logarithmic diameter; 
in fact, we can also apply the exponential operation to the DCell network.

While the exponential operation on graphs is a useful tool for constructing
very large-scale networks, 
the operation itself may be of theoretical interest. 
It would be interesting to study other properties (e.g., symmetry, Hamiltonian-connectedness,
fault-tolerance, diagnosability) and applications of exponential graphs 
from the theoretical or practical point of view.

\section*{Acknowledgements}

This work was supported by JSPS KAKENHI Grant Number JP19K11829.

\if0
\subsection*{Data availability}

Not applicable.

\subsection*{Conflict of interest}

The author declares that he has no conflict of interest.
\fi


\begin{thebibliography}{99}

\bibitem{B-C-L}
Behzad, M., Chartrand, G., Lesniak-Foster, L.:
Graphs \& Digraphs, Prindle, Weber \& Schmidt, Boston (1979)

\bibitem{B-P}
Bermond, J-C., Peyrat, C.:  
de Bruijn and Kautz networks: a competitor for the hypercube?, 
in: Hypercube and Distributed Computers (editors: F. Andr\'{e} and J.P. Verjus), 
North-Holland, pp. 279--293 (1989)

\bibitem{CCW}
Chen, G., Cheng, B., Wang, D.:  
Constructing completely independent spanning trees in data center network based on augmented cube. 
{\em IEEE Trans. Parallel Distrib. Syst.} \textbf{32}, 665--673  (2021) 

\bibitem{Ch-et-al}
Chartrand, G., Hobbs, A.M., Jung, H.A., Kapoor, S.F., Nash-Williams, C.St.J.A.:
The square of a block is hamiltonian-connected.
{\em J. Combin. Theory} \textbf{16B}, 290--292 (1970)

\bibitem{CPHYC}
Chang, Y.-H., Pai, K.-J., Hsu, C.-C., Yang, J.-S., Chang, J.-M.: 
Constructing dual-CISTs of folded divide-and-swap cubes. 
{\em Theor. Comput. Sci.} \textbf{856}, 75--87 (2021) 

\bibitem{Chen-et-al}
Chen, Y.C., Tan, J.J., Hsu, L.H., Kao, S.S.:
Super-connectivity and super-edge-connectivity for some interconnection networks. 
{\em Applied Mathematics and Computation} \textbf{140}, 245--254 (2003)

\bibitem{C-L}
Cull, P., Larson, S.M.:
The M\"{o}bius cubes. {\em IEEE Trans. Comput.} \textbf{44}, 647--659 (1995)

\bibitem{D} Dirac, G.A.: Some theorems on abstract graphs.
{\em Proc. London Math. Soc.} \textbf{2}, 69--81 (1952) 

\bibitem{Ef}
Efe, K.:
The crossed cube for architecture for parallel computing.
{\em Parallel Distrib. Syst.} \textbf{3}, 513--524 (1992)

\bibitem{Fan}
Fan, J.:
Hamilton-connectivity and cycle-embedding of the M\"{o}bius cube.
{\em Inform. Process. Lett.} \textbf{82}, 113--117 (2002)  

\bibitem{Fan-et-al}
Fan, J., Lin, X., Pan, Y., Jia, X.:
Optimal fault-tolerant embedding of paths in twisted cubes. 
{\em J. Parallel Distribut. Comput.} \textbf{67}, 205-214 (2007) 

\bibitem{Guo-et-al} Guo, C., Wu, H., Tan, K., Shi, L., Zhang, Y., Lu, S.:  
DCell: a scalable and fault-tolerant network structure for data centers, 
{\em ACM SIGCOMM Comput. Commun. Rev.} \textbf{38}, 75--86 (2008)

\bibitem{H01} 
Hasunuma,T.: 
Completely independent spanning trees in the underlying graph of a line digraph.  
{\em Discrete Math.} \textbf{234}, 149--157 (2001) 

\bibitem{Ha}
Hasunuma, T.:
Structural properties of subdivided-line graphs.
{\em J. Discrete Algorithms} \textbf{31}, 69--86 (2015)

\bibitem{Ha2}
Hasunuma, T.: Completely independent spanning trees in line graphs.
{\em Graphs and Combin.} \textbf{39}, 90 (2023)

\bibitem{HM}
Hasunuma, T., Morisaka, C.: 
Completely independent spanning trees in torus networks. 
{\em Networks} \textbf{60}, 59-69 (2012) 

\bibitem{H-K-S}
Hilbers, P.A.J., Koopman, M.R.J., van de Snepscheut, J.L.A.:
The twisted cube, in: Parallel Architectures and Languages Europe,
Lecture Notes in Comput. Sci., vol. 258, pp.152--159 (1987)

\bibitem{Ka}
Karaganis, J.J.:
On the cube of a graph.
{\em Canad. Math. Bull.} \textbf{11}, 295--296 (1968)

\bibitem{Kli-et-al}
Kliegl, M., Lee, J., Li, J., Zhang, X.-C., Guo, D., Rinc\'{o}n, D.:
Generalized DCell structure for load-balanced data center networks.
in: 2010 INFOCOM IEEE Conference on Computer Communications Workshops,
pp. 1--5 (2010) 

\bibitem{Li-et-al}
Li, X., Fan, J., Lin, C.-K., Cheng, B., Jia, X.:
The extra connectivity, extra conditional diagnosability and $t$/$k$-diagnosability 
of the data center network DCell.
{\em Theor. Comput. Sci.} \textbf{766}, 16--29 (2019)

\bibitem{Liu-et-al}
Liu, X., Meng, J., Sabir, E.:
Component connectivity of the data center network DCell. 
{\em Applied Math. Comput.} \textbf{444}, 127822 (2023)

\bibitem{Lu-et-al}
L\"{u}, M., Chen, G.-L., Xu, J.-M.:
On super edge-connectivity of Cartesian product graphs. 
{\em Networks} \textbf{49}, 152--157 (2007)

\bibitem{L-W}
L\"{u}, H., Wu, T.:
Super edge-connectivity and matching preclusion of data center networks.
{\em Discr. Math. Theor. Comput. Sci.} 
\textbf{21}, No. 4, 2 (2019)

\bibitem{Ma-et-al}
Ma, M., Liu, G., Xu, J.M.:
Fault-tolerant embedding of paths in crossed cubes. 
{\em Theor. Comput. Sci.} \textbf{407}, 110-116 (2008)

\bibitem{PCC}
Pai, K.-J., Chang, R.-S., Chang, J.-M.: 
A protection routing with secure mechanism in M\"{o}bius cubes.  
{\em J. Parallel Distrib. Comput.} \textbf{140}, 1--12 (2020) 

\bibitem{Qin-et-al}
Qin, X.-W., Chang, J.-M., Hao, R.-X.:
Constructing dual-CISTs of DCell data center networks.
{\em Applied Math. Comput.} \textbf{362}, 124546 (2019)

\bibitem{Se}
Sekanina, M.:
On an ordering of the set of verties of a connected graph.
{\em Publ. Fac. Sci. Univ. Brno.} \textbf{412}, 137-142 (1960)

\bibitem{Sp}
\v{S}pacapan, S.: 
Connectivity of Cartesian products of graphs,
Appl. Math. Lett. \textbf{21}, 682--685 (2008)

\bibitem{Wang-et-al}
Wang, X., Erickson, A., Fan, J., Jia, X.:
Hamiltonian properties of DCell networks. 
{\em The Computer Journal} \textbf{58}, 2944-2955 (2015) 
 
\bibitem{Wang-et-al2}
Wang, X., Fan, J., Zhou, J., Lin, C.-K.:
The restricted $h$-connectivity of the data center network DCell.
{\em Discrete Applied Math.} \textbf{203}, 144--157 (2016)

\bibitem{Wang-et-al3}
Wang, X., Fan, J., Jia, X., Lin, C-K.:
An efficient algorithm to construct disjoint path covers of DCell networks.
{\em Theor. Comput. Sci.} \textbf{609}, 197--210 (2016)




\end{thebibliography}
\end{document}